\def\thebibliograph#1#2{\section*{{\normalsize \bf #2}}\list
	{[\arabic{enumi}]}{\settowidth\labelwidth{[#1]}\leftmargin\labelwidth
		\advance\leftmargin\labelsep
		\usecounter{enumi}}
	\def\newblock{\hskip .11em plus .33em minus -.07em}
	\SFoppy
	\sfcode`\.=1000\relax}
\newcommand{\sch}[1]{{\mathcal S}_{{#1}}}
\newcommand{\temp}[1]{{\mathcal S}'_{{#1}}}
\newcommand{\poly}[1]{{\mathcal P}_{{#1}}}
\newcommand{\euclid}{{\mathbb{R}}^n}
\newcommand{\N}{{\mathbb{N}}}
\newcommand{\cs}{{\mathcal{S}}}
\newcommand{\z}{{\mathbb{Z}}}
\newcommand{\R}{{\mathbb{R}^n}}
\newcommand{\re}{{\mathbb{R}}}
\newcommand{\besovn}[3]{\dot{B}_{{#2},{#3}}^{#1}({\mathbb R}^n)}
\newcommand{\lizorn}[3]{\dot{F}_{{#2},{#3}}^{#1}({\mathbb R}^n)}
\newtheorem{thm}{Theorem}[section]
 \newtheorem{cor}[thm]{Corollary}
 \newtheorem{lem}[thm]{Lemma}
 \newtheorem{prop}[thm]{Proposition}
 \theoremstyle{definition}
 \newtheorem{defn}[thm]{Definition}
 \theoremstyle{remark}
 \newtheorem{rem}[thm]{Remark}
 \numberwithin{equation}{section}
\newcommand{\be}{\begin{equation}}
\newcommand{\ee}{\end{equation}}
 \title{Szasz's theorem and its generalizations}
 \author{G\'erard Bourdaud} 
   \date{\today}
\begin{document}
 \maketitle

 \begin{abstract}
		We establish the most general Szasz type estimates for homogeneous Besov and Lizorkin-Triebel spaces, and their realizations.	\end{abstract}

\textit{Keywords:} Fourier transformation, homogeneous Besov spaces, homogeneous Lizorkin-Triebel spaces, realizations.
\textit{2010 Mathematics Subject Classification:} 46E35, 42A38.

 \section{Introduction}

What we call a {\em Szasz type  theorem} is the following estimate :
\begin{equation}\label{szasz}  \left(       \int_{\euclid }     |\xi|^{\theta p} |\mathcal{F}(f)(\xi)|^p\,\mathrm{d}\xi                            \right)^{1/p}      
   \leq c\, \|f\|_{\dot A^{s}_{r,q}(\R)}\,,               \end{equation}
where $\mathcal{F}$ is the  Fourier transformation, and $c$ depends only on the fixed parameters $s,p,q,r,n, \theta$. Here  $\dot A^{s}_{r,q}(\R)$ denotes the  homogeneous Besov space $\besovn{s}{r}{q}$ or the Lizor\-kin-Triebel space $\lizorn{s}{r}{q}$, and the parameters satisfy
 \begin{equation}\label{param} s\in \mathbb{R}\,, \quad p,q,r\in ]0,\infty]\,,\end{equation}
 see  Section \ref{def} for the detailed definitions.
  By an easy homogeneity argument, the value of
 $\theta$ is necessarily \begin{equation}\label{goodt} \theta=s+ n-\frac np -\frac nr\,.\end{equation}
This number $\theta$ will be referred as the {\em Szasz exponent} associated with $(s,p,q,r,n)$.
A quick reading of the estimate
 (\ref{szasz}) might lead to the following statement : {\it for all $f\in\dot A^{s}_{p,q}(\R)$,  $\mathcal{F}(f)$ is a locally integrable function  on $\euclid$ satisfying  \em{(\ref{szasz})}. } But this is trivially inexact : if $f$ is  a nonzero polynomial, the r.h.s. of (\ref{szasz}) is $0$, while $\mathcal{F}(f)$ is a nonzero distribution supported by $\{0\}$, which does not belong to $L_{1,loc}(\euclid)$.
 
 To obtain a correct formulation, it is necessary to deal with a {\em modified Fourier transformation}.  Let us recall that $\dot A^{s}_{r,q}(\R)$ is a subspace of  $ \mathcal{S}'_\infty(\euclid)$, the space of tempered distributions modulo polynomials. Then we have the following statement, whose easy proof is left to the reader :
 \begin{prop}\label{fourier+} There exists a unique one-to-one continuous linear mapping \[\dot{\mathcal F}: \mathcal{S}'_\infty(\euclid)\rightarrow
 \mathcal{D}'(\euclid \setminus \{0\})\] s.t., for all $f\in   \mathcal{S}'(\euclid)$,
 $ \dot{\mathcal F}([f]_\infty)$ is the restriction of $\mathcal{F}(f)$ to $\euclid \setminus \{0\}$, where $ [f]_\infty$ denotes the equivalence class of $f$ modulo polynomials.
 \end{prop}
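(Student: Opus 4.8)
The plan is to obtain $\dot{\mathcal F}$ by factoring a single continuous linear map through the quotient that defines $\temp{\infty}$, and then to read off injectivity from the structure of distributions supported at a point. Throughout I use that $\temp{\infty}$ is $\cs'(\euclid)$ modulo the subspace $\mathcal P$ of all polynomials, equipped with the quotient topology, and that $f\mapsto[f]_\infty$ is the associated surjective quotient map.

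First I would introduce the linear map $R:\cs'(\euclid)\to\mathcal D'(\euclid\setminus\{0\})$ carrying $f$ to the restriction of $\cf(f)$ to the open set $\euclid\setminus\{0\}$. It is continuous, being the composition of the (continuous) Fourier transformation on $\cs'(\euclid)$ with the restriction map $\cs'(\euclid)\to\mathcal D'(\euclid\setminus\{0\})$, and the latter is itself continuous, being the transpose of the continuous extension-by-zero embedding $\mathcal D(\euclid\setminus\{0\})\hookrightarrow\cs(\euclid)$. Next I would check that $R$ vanishes on $\mathcal P$: the Fourier transform of a polynomial is a finite linear combination of derivatives of the Dirac mass $\delta_0$, hence a distribution supported by $\{0\}$, whose restriction to $\euclid\setminus\{0\}$ is $0$. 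Consequently $R$ factors, uniquely since $f\mapsto[f]_\infty$ is onto, as $R=\dot{\mathcal F}\circ[\,\cdot\,]_\infty$ for a linear map $\dot{\mathcal F}:\temp{\infty}\to\mathcal D'(\euclid\setminus\{0\})$, and this $\dot{\mathcal F}$ is continuous by the very definition of the quotient topology. This yields at once the existence of $\dot{\mathcal F}$, its compatibility with $\cf$ on $\cs'(\euclid)$, and its uniqueness.

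It then remains to establish that $\dot{\mathcal F}$ is one-to-one. Here I would take $g\in\cs'(\euclid)$ with $\dot{\mathcal F}([g]_\infty)=0$, i.e.\ $\cf(g)$ vanishing on $\euclid\setminus\{0\}$; by the classical description of distributions carried by a single point, $\cf(g)$ is a finite linear combination of derivatives of $\delta_0$, so that $g=\cfi(\cf(g))$ is a polynomial and $[g]_\infty=0$. Hence $\ker\dot{\mathcal F}=\{0\}$.

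The only genuine ingredient is the structure theorem for distributions supported at the origin (used in the injectivity step), together with the elementary facts that $\cf$ maps polynomials to such distributions and $\cfi$ maps them back to polynomials; everything else is soft locally convex topology, which is presumably why the authors leave the proof to the reader. The one point deserving a moment's care is the choice of topology on $\temp{\infty}$, but the factorization argument is insensitive to it as long as $\temp{\infty}$ carries the quotient topology from $\cs'(\euclid)$.
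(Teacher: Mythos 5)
Your proof is correct and is precisely the standard factorization argument the author had in mind when declaring the proof ``easy'' and leaving it to the reader: define the continuous restriction $R=(\,\cdot\,|_{\euclid\setminus\{0\}})\circ\mathcal F$ on $\mathcal S'(\euclid)$, observe it kills $\mathcal P_\infty$, pass to the quotient, and read off injectivity from the structure theorem for distributions supported at the origin. The one subtlety you flag — the topology on $\mathcal S'_\infty$ — is harmless here, since $\mathcal P_\infty$ is exactly the annihilator of $\mathcal S_\infty$, so the quotient of the weak-$\ast$ topology on $\mathcal S'$ by $\mathcal P_\infty$ agrees with the weak-$\ast$ topology the paper puts on $\mathcal S'_\infty$ as the dual of $\mathcal S_\infty$.
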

 With the help of $\dot{\mathcal F}$, we can give a more precise formulation of the estimate (\ref{szasz}), namely : 
 \begin{defn} \label{conj}The system  $(s,p,q,r,n)$ satisfies the property (w-SB) (resp. (w-SF)) if there exists $c=c(s,p,q,r,n)>0$ s.t., for all $u\in\dot A^{s}_{r,q}(\R)$, the distribution  $\dot{\mathcal F}(u)$ is a locally integrable function on $\euclid\setminus \{0\}$ s.t.
 \begin{equation}\label{szaszbis}  \left(       \int_{\R \setminus \{0\}}       |\xi|^{\theta p} |\dot{\mathcal F}(u)(\xi)|^p\,\mathrm{d}\xi                            \right)^{1/p}      
   \leq c\, \|u\|_{\dot A^{s}_{r,q}(\R)}\, ,           \end{equation}
where $\theta$ is given by (\ref{goodt}), and $A$ stands for $B$ or $F$, respectively.
   \end{defn}
    We refer to both (w-SB) and (w-SF) as the ``weak Szasz properties".
     The property (w-SB) was proved classically in a number of cases.
  Some of them appeared in the book of J.~Peetre \cite{Pe}. They concern the case $\theta =0$ :
  \begin{itemize}
  \item $r=2$, $0<p\leq 2$, $q=p$, see thm.~4, p.~119, called ``Szasz theorem''. The original Szasz's result concerns periodic Besov spaces in dimension $n=1$, and the particular case $p=1$ is due to Bernstein, see thm.~3, p.~119.
  \item $1\leq r \leq 2$, $p=q=1$ (see (7'), p.~120).
  \item $0< r\leq 1$, $p=q=\infty$, see cor.~3, p.~251, first stated in (3), p.~116, for $r=1$.
  \end{itemize}
 
 Then B.~Jawerth \cite[thm.~3.1]{Ja} proved the property (w-SF) in case 
  \[ s=0\,,\quad \theta = n\left(1- \frac2p\right)\,,\quad 1<r=p<2\,.\]
The case $p=q=r=2$, $\theta =s=0$ is just Plancherel. Jawerth's theorem was proved formerly in case $q=2$, under the name ``Hardy inequality'' (Paley, Hardy and Littlewood in the periodic case, Fefferman and Bj\"ork for $p=1$, Peetre for $0<p<1$).

Peetre stated his Szasz type theorems in the following form :
\[ \mathcal{F}\,:\, \dot{B}^s_{r,q} \quad \rightarrow \quad L_p\,,\]
without taking care of polynomials. He was aware of the difficulty, but he preferred not giving too much details. He was thinking that any clever reader would interpret his statements correctly, i.e. according  to Definition \ref{conj}.

In the present paper, we first prove the Szasz property
in the most general possible case. Then, under supplementary conditions on parameters, we  will improve it,
 by making use of the classical notion of {\em realization} for homogeneous function spaces :
\begin{defn}\label{conj2} The system $(s,p,q,r,n)$ satisfies the property (s-SB) (resp. (s-SF)) if there exist a realization 
$\sigma: \dot{A}^{s}_{r,q}(\R)\rightarrow \mathcal{S}'(\R)$ and $c=c(s,p,q,r,n)>0$ s.t.
\begin{equation}\label{realinl1} \mathcal{F}(\sigma (\dot A^{s}_{r,q}(\R) )) \subset L_{1,loc}(\euclid)\end{equation}
and s.t. the estimate (\ref{szasz}) is satisfied for
 every $f\in \sigma (\dot A^{s}_{r,q}(\R) )$, where $\theta$ is  given by (\ref{goodt}),  and $A$ stands for $B$ or $F$, respectively.
 Both properties are referred as the ``strong Szasz properties".
 \end{defn}
 \begin{rem}\label{wtos}   The strong Szasz property implies the  weak one. Conversely, under condition (\ref{realinl1}), 
 the weak Szasz property implies the strong one.
\end{rem}
We will  prove the strong Szasz property
in the most general possible case, and observe that some systems $(s,p,q,r,n)$ satisfy the weak Szasz property, but not the strong one.

\section{Preliminaries}\label{def}
\subsection{ Notation}
	$\N$ denotes the set of natural numbers, including $0$.  All distribution spaces in this work are defined on  $\R$, except otherwise specified.  We set  $r':=r/(r-1)$ if $1<r\le\infty$ and $r':=\infty$ if $0<r\leq 1$. The symbol $\hookrightarrow$ indicates a continuous embedding. We denote by $\sch{}$  the Schwartz class  and by $\temp{}$  its  topological dual, the space of tempered distributions, endowed with the $\ast$-weak topology. For $0<p\le\infty$, $\|\cdot\|_p$ denotes the $L_p$ quasi-norm w.r.t. Lebesgue measure. 
	For $a\in \R$, the translation operator $\tau_a$ acts on functions according to the formula
	$(\tau_a f)(x):= f(x-a)$ for all $x\in \R$. For $\lambda>0$, the dilation operator $h_\lambda$ acts on functions according to the formula
	$(h_\lambda f)(x):= f(x/\lambda)$ for all $x\in \R$.  The Fourier transform of $f\in \temp{}$ is denoted by $\widehat{f}$. We denote by $\poly{\infty}$ the set of all polynomials on $\R$. We denote by $\sch{\infty}$  the set of all $\varphi\in\sch{}$ such that $\langle u,\varphi \rangle=0$  for all $ u\in\poly{\infty}$, and by  $\temp{\infty}$ its topological dual endowed with the $\ast$-weak topology. For all $f\in\temp{}$, $[f]_{\infty}$ denotes the equivalence class of $f$ modulo $\poly{\infty}$. The mapping which takes any $[f]_{\infty}$ to the restriction of $f$ to $\sch{\infty}$ turns out to be an isomorphism from $\temp{}/ \poly{\infty}$ onto $\temp{\infty}$.
	The constants  $c,c_1,\ldots$, are strictly positive, and depend only on the fixed parameters $n,s,p,q,r$ ; their values may change from a line to another.

	\subsection{Homogeneous Besov and Lizorkin-Triebel  spaces}\label{Besovspaces}

	The usual definition of $\dot A^{s}_{r,q}$ is given via the Littlewood-Paley decomposition, that we briefly recall. We consider a $C^{\infty }$  function $\gamma$, supported by $ 1/2\leq |\xi|\leq 3/2 $,  s.t.
	 \[\sum_{j\in\z}\gamma(2^{j}\xi)=1 \, ,\quad \forall \xi\in \R\setminus\{0\}\,,\]
		and define the operators   
	$Q_j$ $(j\in\z)$ by   $\widehat{Q_{j}f}:=\gamma(2^{-j}(\cdot)) \widehat{f}$. Then for all $f\in  \temp{\infty}$, it holds 
			$f  = \sum_{j\in {\mathbb Z}} Q_jf $ in  $ \temp{\infty}$.
			\begin{defn}Let $s\in\re$ and $0<r,q\leq\infty$ with $r<\infty$ in $F$-case.
	\begin{itemize}
			\item  [ {\rm (i)} ]  The homogeneous Besov space $\dot B^{s}_{r,q}$ is the set of $f\in\temp{\infty}$ s.t \[\|f\|_{\dot B^{s}_{r,q}}:=\,
			\bigl( \sum_{j\in{\z}}2^{jsq}\| {Q}_{j}f\|_{r}^{q} \bigr)^{1/q}<\infty\,.\]
			\item  [{\rm (ii)}] The homogeneous  Lizorkin-Triebel space $ \dot F^{s}_{r,q}$ is the set of $f\in \temp{\infty}$ s.t.
			\[\| f\|_{\dot F^{s}_{r,q}}:=\bigl\|\bigl( \sum_{j\in\z}2^{jsq}| {Q}_{j}f|^{q}\bigr) ^{1/q}\bigr\|_{r}<\infty\,.\]
		\end{itemize}	\end{defn}
	We recall the Nikol'skij type estimates, see \cite[prop.~4]{BMS_10}, \cite[prop.~3.4]{Mou10}, \cite[props.~2.15,~2.17]{Mou_15}, \cite[prop.~2.3.2/1]{RS_96} and \cite{YA} for the proofs.
	\begin{prop}\label{lizoryam}Let $0<a<b$. For all sequence
		let $(u_j)_{j\in {\mathbb Z}}$ in $\temp{}$ s.t.
		\begin{itemize}
			\item  $\widehat{u_j}$ is supported by the annulus
			$a2^j \leq |\xi|\leq b2^j$, for all $j\in \z$,
			\item$M := \big(\sum_{j\in \mathbb {Z}}
			( 2^{js} \|u_j\|_r)^q\big)^{1/q}< \infty$ in the $B$-case ,
			\item $M:=  \big \|\big (\sum_{j\in \mathbb {Z}}
			(2^{js}| u_j|)^q\big )^{1/q}\big \|_r< \infty$ in the $F$-case,
		\end{itemize}
		 the series $  \sum_{j\in \mathbb{ Z}} u_j $
		converges in $\temp{\infty}$ and
		$ \| \sum_{j\in \mathbb{ Z}} u_j \|_{\dot A^{s}_{r,q}}
		\leq cM$,
		where the constant $c$ depends only on $n,s,r,q, a,b$.
		\end{prop}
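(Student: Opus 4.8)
The plan is to reduce the estimate on the full series $\sum_{j\in\mathbb Z}u_j$ to the two standard regimes --- high frequencies and low frequencies --- and glue them. Write $f=\sum_{j\ge 0}u_j+\sum_{j<0}u_j=:f_++f_-$, noting first that each partial sum of $f_+$ and $f_-$ lies in $\temp{}$ (the Fourier supports are in a fixed annulus scaled by $2^j$, hence $f_+$ is a genuine tempered distribution, and $f_-$ converges in $\temp{\infty}$ at least). For the high-frequency part $f_+$, the $u_j$ with $j\ge 0$ have spectra in $a\le|\xi|\le b2^j$, so one is in the classical ``inhomogeneous'' Nikol'skij situation: using a smooth cut-off adapted to the annuli $a2^j\le|\xi|\le b2^j$ one rebuilds Littlewood--Paley pieces $Q_k f_+$ as finite sums $\sum_{|k-j|\le N}\Lambda_{k,j}u_j$ of Fourier-multiplier images of the $u_j$, where $N=N(a,b)$ and the multipliers are uniformly bounded (after rescaling) in the Mihlin/Hörmander sense. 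Then $\|Q_kf_+\|_r\lesssim\sum_{|k-j|\le N}\|u_j\|_r$ in the $B$-case (respectively the pointwise/vector-valued analogue with the maximal function in the $F$-case, via the Fefferman--Stein inequality when $r,q$ can be large, and the Peetre maximal function when $r$ or $q<1$), and summing the shifted $\ell^q$-norms gives $\|f_+\|_{\dot A^s_{r,q}}\le cM$.

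For the low-frequency part $f_-=\sum_{j<0}u_j$, the spectra lie in $a2^j\le|\xi|\le b$, which is the genuinely homogeneous situation; here one must argue that $\sum_{j<0}u_j$ converges in $\temp{\infty}$ (not merely modulo polynomials in a formal sense) --- this is where $M<\infty$ is used to control $\|u_j\|_r$ (or the $F$-norms) and deduce, via the Fourier-support condition and Bernstein's inequality, that $\sum_{j<0}u_j$ is summable when tested against $\varphi\in\sch{\infty}$. Once convergence is in hand, the same multiplier/maximal-function machinery reconstructs $Q_kf_-$ from the $u_j$ with $|k-j|\le N$ and $j<0$, and one concludes $\|f_-\|_{\dot A^s_{r,q}}\le cM$. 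Adding the two estimates and using the quasi-triangle inequality in $\dot A^s_{r,q}$ (with a constant depending only on $q$, or on $r$ and $q$, hence only on the listed parameters) finishes the proof; the statement about convergence of $\sum_{j\in\mathbb Z}u_j$ in $\temp{\infty}$ follows by combining the two partial convergences.

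The main obstacle is the low-frequency convergence in $\temp{\infty}$ and the care needed with quasi-norms when $\min(r,q)<1$. In the $B$-case the convergence of $\sum_{j<0}u_j$ in $\temp{\infty}$ is comparatively painless: pairing with $\varphi\in\sch{\infty}$, one exploits that $\widehat\varphi$ vanishes to infinite order at $0$, so $\langle u_j,\varphi\rangle$ decays geometrically in $|j|$ as $j\to-\infty$, which combines with the polynomial-in-$j$ growth allowed by $M<\infty$. In the $F$-case one first dominates the scalar $L_r$-norms of $u_j$ by $M$ (e.g. $2^{js}\|u_j\|_r\le cM$ for each fixed $j$, with $c=c(r,q)$) and then repeats the argument. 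The other delicate point is that the Fourier multipliers appearing in the reconstruction of $Q_kf$ are of the form $\gamma(2^{-k}\xi)\psi(2^{-j}\xi)$ for a fixed bump $\psi\equiv1$ on $a\le|\xi|\le b$; one must check that these are $L_r$-multipliers (for $r\ge1$) and have $\widehat{}{}$-decay good enough to act on the vector-valued / quasi-Banach $L_r(\ell^q)$ norms for $r,q\in\,]0,\infty]$ --- this is exactly what Propositions of Nikol'skij/Hörmander--Triebel type (the references already cited) provide, and I would invoke them rather than reprove them. With those ingredients the rest is the routine shifted-summation estimate $\sum_k\big(\sum_{|k-j|\le N}a_j\big)^q\le c\sum_j a_j^q$ (or its $\min(q,1)$-power variant), giving $c=c(n,s,r,q,a,b)$ as claimed.
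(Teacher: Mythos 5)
The paper does not prove Proposition~\ref{lizoryam}: it is stated as a recalled Nikol'skij-type estimate and the reader is sent to the cited references for the proofs. There is therefore no argument in the paper to compare yours against; what you have written is a sketch of the standard proof carried out in those sources, and its outline is sound. The reconstruction of $Q_k f$ as a finite sum over $|j-k|\le N(a,b)$, the choice of Mihlin/H\"ormander multipliers for $r\geq 1$ versus Peetre maximal functions when $\min(r,q)<1$, and the shifted $\ell^q$-summation are the right ingredients.

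Two points deserve tightening. First, you describe the size constraint that $M<\infty$ imposes on the low-frequency pieces as ``polynomial-in-$j$ growth''; in fact $2^{js}\|u_j\|_r\leq M$ gives $\|u_j\|_r\lesssim 2^{-js}$, which is \emph{geometric}, not polynomial. This does not break the argument, since the infinite-order vanishing of $\widehat\varphi$ at $0$ together with the Fourier support of $u_j$ shrinking like $2^j$ produces a factor $2^{jN}$ for arbitrary $N$, which beats any fixed geometric rate; but one should say so, and to actually extract the decay one should pair $\widehat{u_j}$ against $\chi(2^{-j}\cdot)\widehat\varphi$ for a fixed cutoff $\chi\equiv1$ on $a\le|\xi|\le b$ and estimate the resulting test function in the appropriate dual norm, rather than merely noting the vanishing. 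Second, the split into $f_+=\sum_{j\ge0}u_j$ and $f_-=\sum_{j<0}u_j$ is valid but not required: one can test $\sum_{j\in\z}u_j$ against $\varphi\in\sch{\infty}$ in a single estimate, using decay of $\widehat\varphi$ at infinity for $j\to+\infty$ and vanishing at $0$ for $j\to-\infty$, which avoids the re-gluing step; your version works too, as long as the quasi-triangle constant of $\dot A^{s}_{r,q}$ is tracked, which you do.
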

		\begin{prop}\label{LPdecompbis} For all
	$f\in \dot{A}^{s}_{r,q}$, there exists a set of polynomials $w_j$, $j\in \z$, such that the series
	$\sum_{j\in {\mathbb Z}}( Q_jf - w_j)$ is  convergent in $ \temp{}$. The sum of that series is a tempered distribution $g$ s.t.
	$[g]_\infty=f$.
	\end{prop}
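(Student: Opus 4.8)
The plan is to pass from convergence in $\temp{\infty}$ to convergence in $\temp{}$ by correcting each $Q_jf$ with a suitable polynomial, where the polynomials for negative $j$ play a different role than those for nonnegative $j$. First I would fix a $C^\infty$ function $\psi$, supported in $\{|\xi|\le 3/2\}$ and equal to $1$ on $\{|\xi|\le 1\}$, so that $\sum_{j\ge 0}\gamma(2^{-j}\cdot)+\psi=1$; write $S_0 f$ for the operator with symbol $\psi$. Split the series formally as $\sum_{j<0}Q_jf+\sum_{j\ge 0}Q_jf$. The high-frequency part $\sum_{j\ge 0}Q_jf$ is handled by the Nikol'skij estimate (Proposition \ref{lizoryam}): since $\widehat{Q_jf}$ is supported in the annulus $2^{j-1}\le|\xi|\le 3\cdot 2^{j-1}$ and the tails of the defining quasi-norm $\|f\|_{\dot A^s_{r,q}}$ are finite, the partial sums $\sum_{0\le j\le N}Q_jf$ form a Cauchy sequence in $\dot A^s_{r,q}$, hence converge in $\temp{\infty}$; but each partial sum is a genuine tempered distribution with spectrum in a fixed-type annulus region, and one checks the convergence actually takes place in $\temp{}$ as well (the Nikol'skij lemma gives a bound in $L_r$, or in $\temp{}$ directly, with no polynomial ambiguity because the spectra avoid a neighbourhood of $0$). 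So for $j\ge 0$ one may simply take $w_j=0$.

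The real work is the low-frequency part $\sum_{j<0}Q_jf$. Here the obstruction to convergence in $\temp{}$ is precisely that each $\widehat{Q_jf}$ lives near the origin, and as $j\to-\infty$ the mass concentrates at $\xi=0$; the limit exists only modulo polynomials. The fix is to subtract from each $Q_jf$ the Taylor polynomial $w_j$ of order $L-1$ (for a large integer $L$ to be chosen in terms of $s,n,r$) of the smooth function $Q_jf$ at the origin: $w_j(x)=\sum_{|\alpha|<L}\frac{x^\alpha}{\alpha!}(\partial^\alpha Q_jf)(0)$. Since $Q_jf$ is band-limited it is a polynomially bounded $C^\infty$ function, so this is well-defined. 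Then $Q_jf-w_j$ vanishes to order $L$ at the origin, and by the standard argument (expand $\widehat{Q_jf-w_j}$, which is $\widehat{Q_jf}$ minus derivatives-of-delta terms, pair against a Schwartz test function and integrate by parts / Taylor-expand the test function) one gets a bound of the form
\[
|\langle Q_jf-w_j,\varphi\rangle|\le c\,2^{jL}\,2^{-jn/r'}\,\|Q_jf\|_r\,p_N(\varphi)
\]
for a suitable Schwartz seminorm $p_N$, uniformly in $j<0$. Choosing $L$ so large that $L-n/r'+s>0$ (this is where the finiteness of the negative tail of $\|f\|_{\dot A^s_{r,q}}$ enters, after summing the geometric factor $2^{jL}2^{-jn/r'}$ against $2^{-js}$ times the $\ell^q$-bounded sequence $2^{js}\|Q_jf\|_r$ via Hölder) makes $\sum_{j<0}(Q_jf-w_j)$ absolutely convergent in $\temp{}$.

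Combining the two parts, $g:=\sum_{j\ge 0}Q_jf+\sum_{j<0}(Q_jf-w_j)$ converges in $\temp{}$. To see $[g]_\infty=f$: the class $[g]_\infty$ is obtained by restricting to $\temp{\infty}$, where every polynomial $w_j$ is killed, so $[g]_\infty=\sum_{j\in\z}[Q_jf]_\infty=\sum_{j\in\z}Q_jf=f$ in $\temp{\infty}$, the last equality being the Littlewood–Paley reproduction formula recalled before Definition 2.3. I expect the main obstacle to be bookkeeping the order $L$ against all the parameters simultaneously — in particular making the estimate on $\langle Q_jf-w_j,\varphi\rangle$ uniform in $j$ with the correct power of $2^j$ in both the $B$- and $F$-cases (in the $F$-case one first passes from the Lizorkin–Triebel quasi-norm to an $\ell^q(L_r)$ bound on $(2^{js}Q_jf)$ at the cost of a harmless embedding, or argues directly), and checking that the high-frequency partial sums really do converge in $\temp{}$ and not merely in $\temp{\infty}$. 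None of these steps is deep, but the choice of $L$ and the uniform decay estimate are the crux.
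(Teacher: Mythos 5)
Your proposal gives a direct, self-contained proof, while the paper disposes of the claim in one line: it cites \cite[rem.~4.9]{Bou_13} for the case $\dot B^{\sigma}_{\infty,\infty}$ and reduces the general case to it via the embedding $\dot A^s_{r,q}\hookrightarrow\dot B^{s-n/r}_{\infty,\infty}$. The mechanism you describe --- take $w_j=0$ for $j\ge 0$ because the spectra stay away from the origin so the pairing with a Schwartz function decays like $2^{-jN}$ for any $N$, and for $j<0$ subtract from $Q_jf$ its Taylor polynomial at $0$ of degree $<L$ with $L$ large --- is precisely the one behind the cited remark, so the two proofs are variants of the same idea; the paper's reduction to $r=q=\infty$ merely avoids having to invoke Nikol'skij-type inequalities to treat $0<r<1$. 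One correction to your key estimate: pairing $\widehat{Q_jf}$ against the Taylor remainder on the test-function side and using $\|\widehat{Q_jf}\|_1\le c\,2^{jn/r}\|Q_jf\|_r$ (H\"older over a ball of radius $\sim 2^j$ combined with Hausdorff--Young for $r\ge 1$, or Nikol'skij for $r<1$) yields the factor $2^{jL}2^{jn/r}$, not $2^{jL}2^{-jn/r'}$, and the summability condition for $j\to-\infty$ is $L>s-n/r$, not $L-n/r'+s>0$. Since $L$ may be chosen as large as one likes, neither error affects the conclusion; still, $s-n/r$ is the critical index that reappears in Proposition~\ref{realtrans} governing the existence of translation-commuting realizations, so it is worth getting the bookkeeping exactly right.
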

	
	\begin{proof} In case of Besov space with $r=q=\infty$, see e.g. \cite[rem.~4.9]{Bou_13}. The general case follows by embedding $
	\dot{A}^{s}_{r,q} \hookrightarrow \dot{B}^{s-(n/p)}_{\infty,\infty}$.	\end{proof}

\section{General Szasz's theorem}

\subsection{Statements of the results}

\begin{thm}\label{thszasz} Let $s,p,q,r$ s.t. {\em (\ref{param})}.
\begin{itemize}
\item The system $(s,p,q,r,n)$ satisfies (w-SB) iff 
\begin{equation}\label{condB} 0<r\leq 2 \quad \mathrm{and} \quad 0<q\leq p\leq r'\,.\end{equation}

\item The system $(s,p,q,r,n)$ satisfies (w-SF) iff 
\begin{equation}\label{condL}0<r\leq 2 \quad \mathrm{and} \quad
\left(\, r\leq  p< r' \quad \mathrm{ or}\quad  q\leq  p= r'\,\right)\,.\end{equation}

\end{itemize}
\end{thm}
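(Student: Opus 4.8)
The argument splits naturally into sufficiency and necessity, and within each, into the $B$-case and the $F$-case. I will treat the two scales in parallel, since the homogeneous Lizorkin-Triebel embedding $\dot F^{s}_{r,q}\hookrightarrow \dot B^{s}_{r,\max(r,q)}$ (and $\dot B^{s}_{r,\min(r,q)}\hookrightarrow \dot F^{s}_{r,q}$) lets one transfer one direction of each estimate cheaply; the genuine work is isolating the optimal indices.

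For \textbf{sufficiency} under (\ref{condB}): given $u\in\dot B^{s}_{r,q}$, use Proposition \ref{LPdecompbis} to pick a representative $g=\sum_{j}(Q_j f-w_j)$ of $u$, so that on $\R\setminus\{0\}$ the distribution $\dot{\mathcal F}(u)$ is represented by $\sum_j \widehat{Q_j f}$, a sum of functions with disjoint dyadic frequency supports (only finitely many overlap at each point). On the annulus $|\xi|\sim 2^j$ one has $|\xi|^{\theta}\sim 2^{j\theta}$, so it suffices to estimate $\sum_j 2^{j\theta p}\|\widehat{Q_j f}\|_p^p$. The single-block bound is the classical Hausdorff--Young-type inequality: since $\widehat{Q_j f}$ is supported in $|\xi|\le 3\cdot 2^{j-1}$, Nikol'skij's inequality gives $\|\widehat{Q_j f}\|_p \le c\,2^{jn(1/p'-1/r')}\,\|\widehat{Q_jf}\|_{r'}$ when $p\le r'$, and then Hausdorff--Young (valid because $r\le 2$, hence $r\le r'$, wait — because $1\le r'\le\infty$ and we apply it to $Q_jf\in L_r$) yields $\|\widehat{Q_jf}\|_{r'}\le \|Q_jf\|_r$; one checks that the exponent on $2^j$ combines with $2^{j\theta}$ to give exactly $2^{js}$ by the definition (\ref{goodt}) of $\theta$. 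Summing in $j$ and using $q\le p$ (so that $\ell^q\hookrightarrow\ell^p$) converts $\bigl(\sum_j 2^{jsq}\|Q_jf\|_r^q\bigr)^{1/q}=\|u\|_{\dot B^{s}_{r,q}}$ into the desired $\ell^p$ bound. The $F$-case under (\ref{condL}) follows the same template, but now the summation in $j$ must be done inside the $L_r$-norm; for $p=r'$ one needs $q\le p$ as in the $B$-case, while for $r\le p<r'$ one exploits the embedding $\dot F^{s}_{r,q}\hookrightarrow \dot F^{s}_{r,\max(r,q)}$ together with a Fefferman--Stein / Minkowski maneuver that replaces the troublesome mixed norm by $\|(\sum_j(2^{js}|Q_jf|)^r)^{1/r}\|_r$-type quantities, reducing to the $B$-case with third index $r$.

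For \textbf{necessity}, I would test (\ref{szaszbis}) on carefully chosen lacunary series. The canonical test function is $u_N=\sum_{j=1}^{N}\epsilon_j\,2^{-js}\,(h_{2^j}\psi)$ for a fixed Schwartz function $\psi$ with $\widehat\psi$ supported in the basic annulus, where the $\epsilon_j$ are either $\pm1$ signs (to be averaged via Khintchine) or all $1$. By Proposition \ref{lizoryam}, $\|u_N\|_{\dot A^{s}_{r,q}}\lesssim N^{1/q}$ in the $B$-case (and $\lesssim N^{1/q}$ up to the $L_r$-normalization in the $F$-case), whereas the left-hand side of (\ref{szaszbis}) involves $\sum_j 2^{j\theta p}\|\widehat{Q_j u_N}\|_p^p$, which after the scaling bookkeeping grows like $N\cdot(\text{const})$ — more precisely like $N^{\,p/\min(\ldots)}$ with an exponent governed by how $\|\widehat{h_{2^j}\psi}\|_p$ scales, namely $2^{jn/p'}$. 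Matching the growth rates forces $q\le p$. The constraint $r\le 2$ is forced by a dual/second test: take a \emph{single} highly oscillatory block $Q_0 f$ whose Fourier transform is a smooth bump but whose $L_r$-norm is small while $\|\widehat{Q_0f}\|_p$ is comparable to $1$; the sharp relation here is the converse Hausdorff--Young phenomenon, and one uses random trigonometric polynomials (or explicit Rudin--Shapiro-type constructions) to show that $\|\widehat g\|_p\lesssim \|g\|_r$ on a fixed annulus fails once $r>2$ or once $p>r'$. For the $F$-case the endpoint $p=r'$ genuinely needs $q\le p$ while $p<r'$ does not, so the test functions must be tuned to detect exactly that asymmetry — this is where $\dot F$ and $\dot B$ part ways.

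\textbf{Main obstacle.} The routine part is the sufficiency in the $B$-case; the delicate points are (a) the \emph{sharpness} of the index conditions in the $F$-case, particularly separating the open range $r\le p<r'$ (no condition on $q$ beyond $r,q>0$) from the endpoint $p=r'$ (where $q\le p$ reappears), which requires genuinely different test functions on the two sides and a careful Fefferman--Stein vector-valued maximal inequality on the positive side; and (b) constructing the lacunary/random counterexamples with enough precision that the norm in $\dot A^{s}_{r,q}$ is controlled \emph{from above} (via Proposition \ref{lizoryam}) while the Szasz integral is bounded \emph{from below}, without the two estimates leaking constants into each other. I expect the write-up to spend most of its length on the necessity direction and on the $F$-endpoint bookkeeping.
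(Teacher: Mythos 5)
Your macrostructure matches the paper's (Hausdorff--Young plus H\"older on a single dyadic block, $\ell^q\hookrightarrow\ell^p$ for the $B$-sum, embeddings to transfer to $F$, lacunary test functions for necessity), and your $B$-sufficiency for $1\leq r\leq 2$, $q\leq p\leq r'$ is essentially the paper's Substep 1.1--1.3. There are, however, several genuine gaps.

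\textbf{Missing range $0<r<1$ in sufficiency.} Hausdorff--Young is simply unavailable for $r<1$ (and your parenthetical ``$r\leq 2$ hence $r\leq r'$'' does not repair this). The paper closes this by the Nikol'skij embedding $\dot B^s_{r,p}\hookrightarrow \dot B^{s+n-n/r}_{1,p}$, which leaves the Szasz exponent unchanged and reduces everything to $r=1$. You need this step; without it your sufficiency proof only covers $1\leq r\leq 2$.

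\textbf{The $F$-sufficiency for $r\leq p<r'$.} Your proposal here (``a Fefferman--Stein / Minkowski maneuver that replaces the troublesome mixed norm by $\|(\sum_j(2^{js}|Q_jf|)^r)^{1/r}\|_r$'') is a gesture, not a proof, and I do not see how it would close the gap: the quantity you display is just $\|f\|_{\dot F^s_{r,r}}$, and the embedding $\dot F^s_{r,q}\hookrightarrow\dot F^s_{r,r}$ fails for $q>r$. The paper's Substep 1.5 is a different and sharper move: pick $r_1$ strictly between $r$ and $2$, close enough to $r$ that $p\leq r_1'$, set $s_1=s+n/r_1-n/r$, and invoke Jawerth's embedding $\dot F^s_{r,q}\hookrightarrow \dot B^{s_1}_{r_1,r}$. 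Crucially, the third index of the target Besov space is $r$, independent of $q$, which is what allows the whole range $r\leq p<r'$ with \emph{no} constraint on $q$. Your sketch does not produce this $q$-independence.

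\textbf{Necessity of $r\leq 2$.} Your Rudin--Shapiro / random-polynomial plan, if carried out, would show that no uniform constant $c$ can exist in the estimate; to conclude that (w-S$A$) fails one would then need a closed-graph or gliding-hump argument to manufacture a single offending $u\in\dot A^s_{r,q}$. The paper short-circuits all of this with Kaufman's theorem: for $r>2$ there is a nonzero measure $\mu$ on a compact null set $K\subset\R\setminus\{0\}$ with $\mathcal F^{-1}\mu\in L_r$. Setting $f=\mathcal F^{-1}\mu$, the Fourier support condition immediately gives $f\in\dot A^s_{r,q}$ for \emph{every} $s,q$, and $\widehat f=\mu$ is supported on a null set, hence not in $L_{1,\mathrm{loc}}(\R\setminus\{0\})$. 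This is a single, explicit witness that (w-S$A$) fails outright; you should adopt it rather than your indirect route.

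\textbf{Necessity of $p\geq r$ in the $F$-case.} You discuss only the $q\leq p$ constraint at the endpoint $p=r'$ and the ``no $q$-constraint'' phenomenon in the open range; you do not address why $p<r$ kills (w-SF). The paper's Substep 2.4 handles this via the embedding $\dot B^{s_1}_{r_1,r_1}=\dot F^{s_1}_{r_1,r_1}\hookrightarrow \dot F^s_{r,q}$ with $p<r_1<r$ (same Szasz exponent), then applies the already-established $B$-necessity of $q\leq p$ to the system $(s_1,p,r_1,r_1,n)$, which forces $r_1\leq p$, a contradiction. Similarly your treatment of $p>r'$ for $F$ needs the analogous transfer (Substep 2.5), and the endpoint $p=r'<q$ requires its own explicit test series (Substep 2.6, a variant of the $f=\sum_k k 2^{-k\theta}f_k$ construction with weights $k^{-1/p}$); you flag this last point as delicate but give no construction.

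In short: the strategic outline is sound and parallel to the paper's, but the $F$-sufficiency for $r\leq p<r'$, the $r\leq 2$ necessity, and the $p\geq r$ necessity all need concrete ingredients (the $\dot F\hookrightarrow\dot B^{s_1}_{r_1,r}$ embedding, Kaufman's theorem, and the $B$-to-$F$ transfer embeddings respectively) that your proposal either replaces with something that won't close or omits entirely.
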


The above theorem has a counterpart for the usual (inhomogeneous) Besov and Lizorkin-Triebel spaces :

\begin{thm}\label{thszaszinhom} Let $s,p,q,r$ s.t. {\em (\ref{param})}, and $\theta$ the associated Szasz exponent.
Let us consider the following property: {\em $(\cal{Z})$\,There exists $c>0$ s.t., for all $f\in A^{s}_{r,q}$, $\mathcal{F}(f)$ is a locally integrable function on $\R$ satisfying the estimate
\[  \left(       \int_{\euclid }  (1+   |\xi|)^{\theta p} |\mathcal{F}(f)(\xi)|^p\,\mathrm{d}\xi                            \right)^{1/p}      
   \leq c\, \|f\|_{A^{s}_{r,q}}\,,\]}
   where $A^{s}_{r,q}$ stands for $B^{s}_{r,q}$ (resp. $F^{s}_{r,q}$).
 Then $(\cal{Z})$
holds iff (\ref{condB}) (resp. (\,\ref{condL}\,)).
\end{thm}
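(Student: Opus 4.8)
The plan is to deduce Theorem~\ref{thszaszinhom} from the homogeneous result of Theorem~\ref{thszasz}, exploiting the two standard bridges between the inhomogeneous space $A^s_{r,q}$ and the homogeneous space $\dot A^s_{r,q}$: the decomposition of the weight $(1+|\xi|)^{\theta p}$ into a ``low-frequency'' part (behaving like a constant near $0$) and a ``high-frequency'' part (behaving like $|\xi|^{\theta p}$ near $\infty$), and the corresponding splitting of the Littlewood--Paley decomposition of $f$ into $\sum_{j\le 0}Q_jf$ and $\sum_{j>0}Q_jf$. A key point that simplifies everything is that, for an \emph{inhomogeneous} element $f\in A^s_{r,q}$, the Fourier transform $\mathcal F(f)$ is an honest distribution (no polynomial ambiguity), so the statement $(\mathcal Z)$ is literally about integrability of $\mathcal F(f)$; there is no need to pass through $\dot{\mathcal F}$ and the quotient by $\poly{\infty}$ as in Definition~\ref{conj}.

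For the ``if'' direction, assume $(\ref{condB})$ (resp.\ $(\ref{condL})$). First I would observe that $f\in A^s_{r,q}\hookrightarrow \dot A^s_{r,q}$ (as distributions, using that on the homogeneous side the class $[f]_\infty$ carries all the information away from $0$), so Theorem~\ref{thszasz} gives that $\dot{\mathcal F}(f)=\mathcal F(f)|_{\R\setminus\{0\}}$ is locally integrable on $\R\setminus\{0\}$ with $\int_{|\xi|\ge 1}|\xi|^{\theta p}|\mathcal F(f)(\xi)|^p\,d\xi\lesssim \|f\|_{\dot A^s_{r,q}}^p\lesssim\|f\|_{A^s_{r,q}}^p$. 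This handles the high-frequency part of the weight, since $(1+|\xi|)^{\theta p}\sim |\xi|^{\theta p}$ there. For the region $|\xi|\le 1$ one uses the inhomogeneous structure: write $f=Q_0'f+\sum_{j\ge1}Q_jf$ where $Q_0'$ collects the low frequencies. Only finitely many $Q_jf$ have spectrum meeting $\{|\xi|\le 1\}$; each such piece is a band-limited $L_r$ function whose Fourier transform is smooth and in particular in $L_p$ on the compact set $\{|\xi|\le 1\}$, with norm controlled by $\|Q_jf\|_r$ (Nikol'skij--Bernstein on a fixed ball, i.e.\ an embedding $L_r\cap\{\mathrm{spectrum\ in\ a\ ball}\}\hookrightarrow \mathcal{F}L_p$ with constant depending only on the ball). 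Summing the finitely many terms and using $2^{js}\|Q_jf\|_r\le\|f\|_{A^s_{r,q}}$ gives $\int_{|\xi|\le1}(1+|\xi|)^{\theta p}|\mathcal F(f)(\xi)|^p\,d\xi\lesssim\|f\|_{A^s_{r,q}}^p$. Adding the two regions yields $(\mathcal Z)$. (In the quasi-norm range $p<1$ one uses the $p$-triangle inequality throughout, which only affects constants.)

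For the ``only if'' direction, suppose $(\mathcal Z)$ holds; I must derive $(\ref{condB})$ (resp.\ $(\ref{condL})$), and by Theorem~\ref{thszasz} it suffices to derive (w-SB) (resp.\ (w-SF)), i.e.\ the homogeneous estimate $(\ref{szaszbis})$. Given $u\in\dot A^s_{r,q}$, I would produce an inhomogeneous representative: by Proposition~\ref{LPdecompbis} there is $g\in\temp{}$ with $[g]_\infty=u$, but $g$ need not lie in $A^s_{r,q}$. The clean route is a dilation/localization argument: for $u$ with $\dot{\mathcal F}(u)$ supported (spectrally) away from $0$ — a dense class, or one may reduce to it — the homogeneous and inhomogeneous norms of a suitable representative are comparable up to the low-frequency contribution, which is absent. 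More concretely, for a test element $u=\sum_{|j|\le N}Q_j u$ one can rescale by $h_\lambda$ with $\lambda$ large so that the whole spectrum sits in $\{|\xi|\ge1\}$; then the chosen representative lies in $A^s_{r,q}$, the inhomogeneous weight $(1+|\xi|)^{\theta p}$ is comparable to $|\xi|^{\theta p}$ on the spectrum, and $(\mathcal Z)$ reduces to $(\ref{szaszbis})$ for $u$, uniformly in $\lambda$ after tracking the homogeneity exponents (which match precisely because $\theta$ is the Szasz exponent $(\ref{goodt})$). A density/monotone convergence argument then extends $(\ref{szaszbis})$ to all of $\dot A^s_{r,q}$, so (w-SB) (resp.\ (w-SF)) holds, and Theorem~\ref{thszasz} forces $(\ref{condB})$ (resp.\ $(\ref{condL})$).

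The main obstacle I anticipate is the bookkeeping in the ``only if'' direction: one must choose the inhomogeneous representative of $u$ so that its $A^s_{r,q}$-norm is genuinely controlled (Proposition~\ref{LPdecompbis} only gives \emph{some} representative in $\temp{}$, not one in $A^s_{r,q}$, so the polynomial corrections $w_j$ must be shown to be harmless after spectral localization away from $0$), and then verify that the low-frequency part of the inhomogeneous weight does not help the reverse implication — which is exactly why one localizes the spectrum into $\{|\xi|\ge1\}$ before invoking $(\mathcal Z)$. Getting the uniformity in the dilation parameter right, and confirming that the homogeneity exponents cancel precisely thanks to $(\ref{goodt})$, is the delicate computational core; everything else is a routine combination of Proposition~\ref{lizoryam}, the embedding $\dot A^s_{r,q}\hookrightarrow\dot B^{s-n/p}_{\infty,\infty}$, and the elementary fact that band-limited $L_r$ functions have $\mathcal F L_p$-bounded restrictions of their transforms to any fixed ball.
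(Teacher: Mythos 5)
Your approach diverges genuinely from the paper's: where the paper disposes of Theorem~\ref{thszaszinhom} by re-running the Substep~1.1/1.2 Littlewood--Paley argument directly on the inhomogeneous decomposition $f = S_0f + \sum_{j\ge1}Q_jf$ (with the weight split as you describe), you instead try to invoke Theorem~\ref{thszasz} as a black box via the embedding $A^s_{r,q}\hookrightarrow\dot A^s_{r,q}$. That embedding is where the ``if'' half of your proposal breaks.

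The inequality $\|[f]_\infty\|_{\dot A^s_{r,q}}\lesssim\|f\|_{A^s_{r,q}}$ is \emph{not} a universal fact: it requires a lower bound on $s$ (roughly $s>0$, or $s=0$ with $q$ small). Theorem~\ref{thszaszinhom} imposes no such bound. Concretely, take $n=1$, $s=-1$, $r=q=2$, so $B^{-1}_{2,2}=H^{-1}$ and $\dot B^{-1}_{2,2}=\dot H^{-1}$; the function with $\widehat f(\xi)=|\xi|^{1/4}\mathbf 1_{|\xi|\le1}$ lies in $H^{-1}$ but $\int_{|\xi|\le1}|\xi|^{-2}|\widehat f|^2\,\mathrm{d}\xi=\infty$, so $[f]_\infty\notin\dot H^{-1}$. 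Thus you cannot write $\int_{|\xi|\ge1}|\xi|^{\theta p}|\widehat f|^p\,\mathrm{d}\xi\lesssim\|f\|_{\dot A^s_{r,q}}^p\lesssim\|f\|_{A^s_{r,q}}^p$: the middle quantity can be infinite even when the two outer ones are finite. The conclusion you want for the region $|\xi|\ge1$ is nevertheless true, but for a different reason: only the blocks $Q_jf$ with $j\ge -1$ contribute to $\widehat f$ there, and those are controlled by $\|f\|_{A^s_{r,q}}$ (together with finitely many low blocks dominated by $\|S_0f\|_r$). So the correct route is precisely the paper's --- repeat the annulus-by-annulus Hausdorff--Young/H\"older estimate of Substep~1.1, restricted to $j\ge-1$, plus one more application of Hausdorff--Young for $S_0f$ on the ball $|\xi|\le1$ --- rather than appealing to the homogeneous theorem through an embedding that does not exist. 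Your low-frequency analysis of the weight on $|\xi|\le1$ is fine as is. Your ``only if'' rescaling argument is plausible in outline (the homogeneity exponents do cancel as you compute), but it is considerably heavier than what the paper needs: the counterexamples in Step~2 of the proof of Theorem~\ref{thszasz} are built from band-limited functions with spectra bounded away from $0$ and $\infty$, so they already live in $A^s_{r,q}$ with comparable norm and satisfy $(1+|\xi|)^{\theta p}\sim|\xi|^{\theta p}$ on their spectra; hence they falsify $(\mathcal Z)$ directly with no dilation/density step.
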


The above property $(\cal{Z})$ has been classically established in particular cases, see e.g. \cite[p.55]{ST}.

\subsection{Proof}

We limit ourselves to Theorem \ref{thszasz}. The same arguments work for Theorem \ref{thszaszinhom}, up to minor modifications.
 
 {\em Step 1.} We first prove that the various conditions on $r,p,q$ imply the weak Szasz property.
 
 {\em Substep 1.1.}  Let us assume $1\leq r\leq 2$, $0<p\leq r'$ and $p<\infty$.
 We introduce the annulus
 \[U_j:= \{ \xi\in \R\,:\, 2^j\leq |\xi|\leq 2^{j+1}\,\}\,, j\in \z\,.\]
 Let $u\in \dot{B}^s_{r,p}$. By the Haussdorff-Young theorem, it holds
\[ \|\widehat{Q_ju}\|_{r'}\leq c\, \|Q_ju\|_r\,.\]
Thanks to condition $p\leq r'$, we can apply H\"older's inequality and deduce
  \be\label{HY}     \int_{U_j} \left| \widehat{Q_ju}(\xi)\right|^p\,\mathrm{d}\xi
   \leq c\, 2^{jn(1-(p/r'))}    \|Q_ju\|^p_r\,.   \ee            
     By definition of $Q_j$, the function $\widehat{Q_ju}$ is supported by the annulus
                \[ 2^{j-1}\leq |\xi|\leq 3.2^{j-1}  \,.\]
                Thus the function
   \[v(\xi):= \sum_{j\in \z}       \widehat{Q_ju}(\xi)\]
is well defined and locally integrable on    $\R\setminus \{0\}$.
                We {\em claim} that
  \be\label{sz}  \left(       \int_{\euclid \setminus \{0\}}     |\xi|^{\theta p} |v(\xi)|^p\,\mathrm{d}\xi                            \right)^{1/p}      
   \leq c\, \|u\|_{{\dot B}^{s}_{r,p}}\, .           \ee
We observe that, for $\xi\in U_j$, it holds $\widehat{Q_ku}(\xi)=0$ for $k<j$ or $k>j+1$.
Hence, by (\ref{HY}) and by definition of $\theta$ (see (\ref{goodt})), we obtain
 \[\int_{\euclid \setminus \{0\}}     |\xi|^{\theta p} |v(\xi)|^p\,\mathrm{d}\xi 
=\sum_{j\in \z}\int_{U_j}     |\xi|^{\theta p} |v(\xi)|^p\,\mathrm{d}\xi\]
      \[\leq c_1\,  \sum_{j\in \z}   2^{j\theta p} \int_{U_j} |\widehat{Q_ju}(\xi)|^p\mathrm{d}\xi\leq
  c_2\, \sum_{j\in \z} 2^{sp} \|Q_ju\|_r^p\,.\]
    This ends up the proof of the {\em claim}.  By Proposition \ref{LPdecompbis}, there exists a convergent series $\sum_{j\in \z} f_j$ in $\mathcal{S}'$ s.t.
    $f_j- Q_ju$ is a polynomial for each $j\in \z$. By setting
    $f:= \sum_{j\in \z} f_j$, we obtain a tempered distribution s.t. $[f]_\infty = u$.
 The restriction of $\widehat{f}$ to $\R\setminus \{0\}$ coincide with $v$. Thus the estimate (\ref{sz}) is precisely (\ref{szaszbis}) for the function space $\dot{B}^s_{r,p}$.  We conclude that $(s,p,p,r,n)$ has the property (w-SB).
 \begin{rem}\label{p=infty}The above proof works as well in case $r=1$ and $p=\infty$, up to minor changes.
  \end{rem}
  {\em Substep 1.2.} Let us assume $0< r<1$.
 By a classical embedding, see e.g. \cite[thm.~2.1]{Ja}, it holds
 \[ \dot{B}^s_{r,p}\quad \hookrightarrow \quad \dot{B}^{s + n-(n/r)}_{1,p}\,.\]
 Both systems $(s,p,p,r,n)$ and $(s + n-(n/r),p,p,1,n)$ have the same Szasz exponent.
 By Substep 1.1 (with $r=1$), we conclude that $(s,p,p,r,n)$ has the property (w-SB).
 
 {\em Substep 1.3.} Under the condition : $0<r\leq 2$ and $q\leq p\leq r'$, we have the embedding
  $\dot{B}^s_{r,q}\, \hookrightarrow \, \dot{B}^{s}_{r,p}$,
  with the same Szasz exponent. By the preceding substeps, the system $(s,p,q,r,n)$ has the property (w-SB).
  
  {\em Substep 1.4.} For Lizorkin-Triebel spaces, we argue similarly, by using embeddings into Besov spaces, without change of Szasz exponent.
We first consider the embedding
  $\dot{F}^s_{r,q}\, \hookrightarrow \,\dot{B}^{s}_{r,\max (r,q)}$,
   see \cite[(1.2)]{Ja}. We conclude that $(s,p,q,r,n)$ has the property (w-SF) if 
   \[ 0<r\leq 2\,, \quad \max(r,q)\leq p\leq r'\,.\]
   
  {\em Substep 1.5.} Now we assume that $0<r<2$ and $r\leq p<r'$. Then we choose $r_1>r$, sufficiently near from $r$ to have
 \[ 0<r<r_1\leq 2\,,\quad r\leq p\leq r_1'\,.\]
 By setting
 \[s_1:=s+\frac{n}{r_1}- \frac{n}{r}\,,\]
we obtain  the embedding
 $\dot{F}^s_{r,q}\,\hookrightarrow \, \dot{B}^{s_1}_{r_1,r}$,
   see \cite[thm.~2.1]{Ja}.
We conclude that $(s,p,q,r,n)$ has the property (w-SF).\\
   
   {\em Step 2.} Now we prove that the Szasz property implies the  various conditions on $r,p,q$.
   
      {\em Substep 2.1.}  To prove the necessity of condition $r\leq 2$, we use the following statement :
   
   \begin{lem}\label{singmeas} For all $2<r\leq \infty$, there exist a compact subset $K$ of $\R$, with Lebesgue measure equal to $0$, s.t. $0\notin K$, and a nonzero Borel measure $\mu$ on $\R$, supported by $K$, s.t. ${\mathcal F}^{-1}\mu\in L_r(\R)$.
   \end{lem}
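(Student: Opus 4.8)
\emph{Proof proposal.} My plan is as follows. Since $\mathcal{F}^{-1}\mu$ coincides, up to the reflection $\xi\mapsto-\xi$ and a dimensional constant, with the bounded continuous function $\widehat{\mu}$, it will be enough to exhibit a nonzero finite Borel measure $\mu$ carried by a compact Lebesgue-null set $K\subset\R$ with $0\notin K$ and such that $\widehat{\mu}\in L_r(\R)$. The case $r=\infty$ is immediate: take $K=\{a\}$ with $a\ne 0$ and $\mu=\delta_a$, so that $|\widehat{\mu}|$ is constant. For $2<r<\infty$ I shall produce such a $\mu$ as a probability measure. Note first that the bound $r>2$ cannot be relaxed: by Plancherel, $\widehat{\mu}\in L_2(\R)$ would force $\mu$ to be an $L_2$ function, which no nonzero measure carried by a null set can be; so the real point is to beat the exponent $2$, and the naive deterministic candidates are insufficient (for $n\ge 2$, the uniform measure on a sphere only reaches $r>2n/(n-1)$, while self-similar Cantor measures do not work at all, their Fourier transforms not even vanishing at infinity).

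\emph{Step 1: the key input.} I would invoke the existence of Salem sets of Fourier dimension arbitrarily close to $n$. Concretely: for every $\alpha\in(0,n)$ there is a compact set $E\subset\R$ of Hausdorff dimension $\alpha$ carrying a probability measure $\mu$ with
\[ |\widehat{\mu}(\xi)|\le c_\beta\,|\xi|^{-\beta}\qquad\text{for all }|\xi|\ge 1 \]
and every $\beta<\alpha/2$; this is Salem's classical randomized Cantor-set construction (Salem; see also Kahane's work on random series of functions, and the standard treatments of Fourier dimension and Hausdorff dimension). Since $r>2$ we have $rn/2>n$, so I may first fix $\alpha<n$ with $r\alpha/2>n$ and then $\beta<\alpha/2$ with $r\beta>n$.

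\emph{Step 2: verification.} With these choices the rest is a routine splitting of the integral. Over the unit ball, $\int_{|\xi|\le 1}|\widehat{\mu}(\xi)|^r\,\mathrm d\xi\le c\,\|\widehat{\mu}\|_\infty^r\le c$, because $\|\widehat{\mu}\|_\infty\le\mu(\R)=1$; over its complement, $\int_{|\xi|>1}|\widehat{\mu}(\xi)|^r\,\mathrm d\xi\le c_\beta^r\int_{|\xi|>1}|\xi|^{-r\beta}\,\mathrm d\xi<\infty$ since $r\beta>n$. Hence $\widehat{\mu}\in L_r(\R)$, and therefore $\mathcal{F}^{-1}\mu\in L_r(\R)$. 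As the Hausdorff dimension of $E$ is $\alpha<n$, its Lebesgue measure is $0$. Finally, replacing $E$ by a translate $K:=a+E$ with $|a|$ large and $\mu$ by its push-forward under $x\mapsto a+x$ — which multiplies $\widehat{\mu}$ by the unimodular factor $e^{-\mathrm{i}\,a\cdot\xi}$ and so changes none of the above estimates — I arrange $0\notin K$ while keeping all the required properties. This completes the construction.

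\emph{Expected main obstacle.} The whole content of the lemma lies in Step 1: one must place a measure on a Lebesgue-null set whose Fourier transform decays faster than $|\xi|^{-n/r}$ for some $r$ arbitrarily close to $2$, i.e. one needs the near-optimal, \emph{isotropic} decay $|\widehat{\mu}(\xi)|\lesssim|\xi|^{-\alpha/2+\varepsilon}$ with $\alpha$ as close to $n$ as desired. This is precisely what the randomization in Salem's construction delivers and what no elementary deterministic example does; granting it, Step 2 is a one-line computation, and the translation at the end is cosmetic.
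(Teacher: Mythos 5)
Your proof is correct and rests on the same underlying idea---a Salem-type measure carried by a Lebesgue-null compact set---but the packaging differs from the paper's in a way worth noting. The paper invokes Kaufman's deterministic construction of a Salem set in dimension one, obtaining a measure $\mu$ on a null compact $C\subset\mathbb{R}$ with $|\mathcal{F}^{-1}\mu(x)|\le c\,(1+|x|)^{-\beta}$ for a chosen $\beta\in(1/r,1/2)$, and then passes to $\mathbb{R}^n$ by taking the $n$-fold tensor product $\mu\otimes\cdots\otimes\mu$ on $K:=C^n$; Fubini immediately yields $\mathcal{F}^{-1}(\mu^{\otimes n})\in L_r(\mathbb{R}^n)$, with no need for any isotropic Fourier decay in dimension $n$. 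You instead appeal to the existence of $n$-dimensional Salem sets of Fourier dimension arbitrarily close to $n$, which gives the isotropic bound $|\widehat\mu(\xi)|\le c_\beta|\xi|^{-\beta}$ for $\beta$ arbitrarily close to $n/2$; your choice of $\alpha$ and $\beta$, the integral splitting, the $r=\infty$ case, and the concluding translation to avoid the origin are all fine. The one imprecision is the attribution: Salem's original randomized Cantor construction is one-dimensional, and a tensor product of one-dimensional Salem measures would \emph{not} produce the isotropic decay you invoke (along a coordinate axis such a product only decays like $|\xi|^{-\beta}$ with $\beta<1/2$, which is weaker than the needed $n/r$ as soon as $n\ge2$). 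To make Step~1 airtight for $n\ge2$ you should cite a genuinely higher-dimensional Salem-set result (Kahane's Brownian-image theorem, or a higher-dimensional random construction), or alternatively adopt the paper's tensor-plus-Fubini device, which keeps all the Fourier analysis in one variable and makes the higher-dimensional Salem machinery unnecessary.
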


\begin{proof}  Let us fix a number $\beta$ s.t.
\[ \frac{1}{r} < \beta < \frac{1}{2}\,.\]
Consider first the case $n=1$. According to Kaufmann's theorem \cite{RK}, see also \cite[thm.~9A2]{TW}, there exist a compact subset $C$ of $\re$, with Lebesgue measure equal to $0$, and a nonzero Borel measure $\mu$ on $\re$, supported by $C$, s.t. the function $g:={\mathcal F}^{-1}\mu$ satisfies the estimate
\begin{equation}\label{estmu} |g(x)| \leq c\, (1+|x|)^{-\beta}\,,\end{equation}
hence $g$ belongs to $L_r(\re)$. By a translation of $C$ and $\mu$, if necessary, we have also $0\notin C$. 
In higher dimension $n$, we define $\mu_n:= \mu \otimes \cdots \otimes \mu$ ($n$ factors). Then $\mu_n$ is a Borel measure supported by $K:=C^n$, and
$f:={\mathcal F}^{-1}\mu_n$ is given by $f(x_1,\ldots,x_n)= g(x_1)\cdots g(x_n)$. Then $f\in L_r(\R)$ follows by (\ref{estmu}).
\end{proof}
Let us keep the same notation as in the above Lemma and proof.
   Since $K$ is a compact subset  of $\R\setminus \{0\}$, it holds
   \[ f= \sum_{j=N}^M Q_jf\]
   for some $N,M\in {\mathbb Z}$. Hence $f$ belongs as well to $\dot{A}^s_{r,q}$ whatever be $s,q$.
   Since the Lebesgue measure of $K$ is equal to $0$,
  $\widehat{f}$ is not locally integrable on $\R\setminus \{0\}$. Hence the Szasz property cannot hold.
   
   {\em Substep 2.2.}  Assume $r'<p\leq \infty$. Let $\varphi\in \sch{}$ s.t. $\widehat{\varphi}$ is supported by the ball $|\xi|\leq 1/2$.
    Let us set $f_k(x):= {\rm e}^{2^kix_1} \varphi (x)$, $k=1,2,\ldots$, and
    \[ f:= \sum_{k=1}^\infty k 2^{-k\theta} f_k\,.\]
    Since $\widehat{f_k}$ is supported by the annulus $C_k:= \{\xi\,:\, \frac34  2^k\leq|\xi| \leq\frac54 2^k\}$, we can apply Proposition \ref{lizoryam} and conclude that
    \[ \|f\|_{ \dot{B}^s_{r,q}}\leq c\, \left(  \sum_{k=1}^\infty  ( 2^{ks} \|k 2^{-k\theta} f_k\|_r)^q\right)^{1/q}
    =c \|\varphi\|_r\, \left(  \sum_{k=1}^\infty  ( k 2^{k(s-\theta)})^q\right)^{1/q}\,.\]
    Here we have
    \[ s-\theta = \frac{n}{p}-\frac{n}{r'}<0\,,\]
hence $f\in \dot{B}^s_{r,q}$. The series which defines $f$ converges as well in $\cs'$ and
  \[ \widehat{f}= \sum_{k=1}^\infty k 2^{-k\theta} \,\tau_{2^ke_1}\widehat{\varphi}\]
  in $\cs'$, where $e_1:=(1,0,\ldots,0)$. For every $k$, we have
\[ \left(       \int_{\R \setminus \{0\}}     |\xi|^{\theta p} |\widehat{f}(\xi)|^p\,\mathrm{d}\xi                            \right)^{1/p} 
\geq  c\, k \left(       \int_{C_k}     |\widehat{\varphi}(\xi - 2^ke_1)|^p\,\mathrm{d}\xi                            \right)^{1/p} 
= c\,k \|\widehat{\varphi}\|_p\,.\]
Thus the property (w-SB) does not hold.

 {\em Substep 2.3.}  Assume $p<q$. Let $\psi\in \sch{}$ s.t. $\widehat{\psi}$ is supported by the annulus $C_0$.
    Let us set 
    \[ f:= \sum_{k=1}^\infty k^{-1/p} 2^{k((n/r)-s)} h_{2^{-k}}\psi\,.\]
    The above series converges in $\mathcal{S}'$, and,
  by Proposition \ref{lizoryam}, $ \|[f]_\infty\|_{ \dot{B}^s_{r,q}}$ is less than
    \[ c\, \left(  \sum_{k=1}^\infty  ( 2^{ks} k^{-1/p} 2^{k((n/r)-s)}\| h_{2^{-k}}\psi\|_r)^q\right)^{1/q}=
c \|\psi\|_r\, \left(  \sum_{k=1}^\infty   k^{-q/p}\right)^{1/q}\,,\]
hence $[f]_\infty\in  \dot{B}^s_{r,q}$.
By definition of $\theta$, it holds
 \[ \widehat{f}(\xi)= \sum_{k=1}^\infty k^{-1/p} 2^{k((n/p)-\theta)} \widehat{\psi}(2^{-k}\xi)\,.\]
Hence $ \int_{\euclid \setminus \{0\}}     |\xi|^{\theta p} | \widehat{f}(\xi)|^p\,\mathrm{d}\xi $ is greater than
 \[                          
\sum_{k=1}^\infty \int_{C_{-k} } \left| k^{-1/p} 2^{k((n/p)-\theta)} \widehat{\psi}(2^{-k}\xi)\right|^p\, |\xi|^{\theta p} \,\mathrm{d}\xi 
\geq c \|\widehat{\psi}  \|^p_p \sum_{k=1}^\infty k^{-1}\,.\]                       
Thus the property (w-SB) does not hold.

{\em Substep 2.4.} Assume $p<r$. Let us take $r_1,s_1$ s.t. 
\[ p<r_1<r\,\quad s_1-\frac{n}{r_1}=s-\frac{n}{r}.\]
According to \cite[(1.3) and thm.~2.1 (ii)]{Ja}, we have the embedding
\begin{equation}\label{embedBF} \dot{B}^{s_1}_{r_1,r_1} = \dot{F}^{s_1}_{r_1,r_1}\hookrightarrow \dot{F}^{s}_{r,q}\end{equation}
with the same Szasz exponent. By condition $p<r_1$ and Substep 2.3,  the system $(s_1,p,r_1,r_1,n)$ has not the property  (w-SB). By (\ref{embedBF}), the system $(s,p,q,r,n)$ has not the property  (w-SF).

{\em Substep 2.5.} Assume $p>r'$. Let us take $r_1,s_1$ s.t. 
\[ p>r'_1>r'\,\quad s_1-\frac{n}{r_1}=s-\frac{n}{r}.\]
Then the embedding (\ref{embedBF}) holds again,
without change of Szasz exponent.
By condition $p>r'_1$ and Substep 2.2 the system $(s_1,p,r_1,r_1,n)$ has not the property  (w-SB). Hence the system $(s,p,q,r,n)$ has not the property  (w-SF).

{\em Substep 2.6.} Assume $p=r'<q$. Let us take the same functions $f_k$ as in Substep 2.2, and define
 \[ f:= \sum_{k=1}^\infty k^{-1/p} 2^{-k\theta} f_k\,.\]
 Here we have $s=\theta$. Then $\|f\|^r_{ \dot{F}^s_{r,q}}$ is less than
        \[ c\, \int_{\R} \left(  \sum_{k=1}^\infty  ( k^{-1/p} | f_k(x)|)^q\right)^{r/q} \mathrm{d}x
         = c\, \int_{\R} \left(  \sum_{k=1}^\infty   k^{-q/p} \right)^{r/q} \, | \varphi (x)|^r \mathrm{d}x\,, \]
        hence $f\in \dot{F}^s_{r,q}$. It holds
         \[      \int_{\R \setminus \{0\}}     |\xi|^{\theta p} |\widehat{f}(\xi)|^p\,\mathrm{d}\xi                           
\geq  c\,\|\widehat{\varphi}\|^p_p\sum_{k=1}^\infty      k^{-1}\,.\]
     We conclude that the property (w-SF) does not hold.

\section{ Szasz's theorem for the realized function spaces}\label{real}

\subsection{Generalities on realizations}

The proof of Theorem \ref{szasz} relies upon the choice of a specific representative for each member of $\dot A^{s}_{r,q}$, with the help of 
Proposition \ref{LPdecompbis}. This choice of representative may be organized in a coherent way, through the notion of realization, that we recall here.
\begin{defn}\label{reali} Let $E$ be a quasi-Banach space continuously embedded into $\mathcal{S}'_\infty$.
A {\em realization} of $E$ is a continuous linear mapping $\sigma: E\rightarrow\mathcal{S}'$ s.t.
$[\sigma (u)]_\infty = u$ for every $u\in E$.
\end{defn}
 In case $E$ is invariant by translations (i.e. if $\tau_a(E)\subset E$ for all $a\in \R$), we say that a realization $\sigma$ of $E$ {\em commutes with translations}  if $\sigma \circ \tau_a= \tau_a \circ \sigma$ for all $a\in \R$ ; such property holds iff  
 $\sigma (E)$ is invariant by translations.  Similar considerations apply to dilation invariance.
 \begin{prop} \label{transl} Let $E$ be a quasi-Banach space continuously embedded into $\mathcal{S}'_\infty$.
 Assume that $E$ admits a realization $\sigma$ s.t.
 \begin{equation} \label{inl1loc} \mathcal{F}(\sigma (E)) \subset L_{1,loc}(\euclid)\,.\end{equation} 
 Then
 \begin{equation} \label{sigma} \sigma(E)= \{ f\in \mathcal{S}'\,:\, [f]_\infty \in E \quad \mathrm{and}\quad \widehat{f}\in L_{1,loc}(\euclid)\}\,.\end{equation}
 \end{prop}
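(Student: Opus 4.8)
\textbf{Proof plan for Proposition \ref{transl}.}

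The plan is to prove the set equality in \eqref{sigma} by two inclusions, the first of which is essentially immediate from the definitions and the second of which requires a small extra argument exploiting the fact that two distributions with the same class modulo polynomials and both having locally integrable Fourier transform away from the origin must coincide. For the inclusion ``$\subset$'': if $f\in\sigma(E)$, then by Definition \ref{reali} we have $[f]_\infty=\sigma^{-1}\!\big([f]_\infty\big)$... more precisely $f=\sigma(u)$ for $u:=[f]_\infty\in E$, so $[f]_\infty\in E$ by construction, and $\widehat f=\mathcal F(\sigma(u))\in L_{1,loc}(\euclid)$ by hypothesis \eqref{inl1loc}. Hence $f$ lies in the right-hand side of \eqref{sigma}.

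For the reverse inclusion ``$\supset$'', take $f\in\mathcal S'$ with $u:=[f]_\infty\in E$ and $\widehat f\in L_{1,loc}(\euclid)$. Set $g:=\sigma(u)\in\mathcal S'$; then $[g]_\infty=u=[f]_\infty$, so $f-g$ is a polynomial, say $f-g=P\in\poly{\infty}$. I want to conclude $P=0$, i.e.\ $f=g=\sigma(u)\in\sigma(E)$. Now $\widehat f\in L_{1,loc}(\euclid)$ by assumption and $\widehat g=\mathcal F(\sigma(u))\in L_{1,loc}(\euclid)$ by \eqref{inl1loc}, hence $\widehat P=\widehat f-\widehat g\in L_{1,loc}(\euclid)$. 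But $\widehat P$ is a distribution supported in $\{0\}$, namely a finite linear combination of $\partial^\alpha\delta_0$; the only such distribution that is (equal to) a locally integrable function is $0$. Therefore $\widehat P=0$, so $P=0$ and $f=\sigma(u)$, which proves $f\in\sigma(E)$.

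The only point needing minor care — and the place I expect a referee might want a word more — is the assertion that a nonzero distribution supported at the origin cannot be a locally integrable function: this follows because if $T=\sum_{|\alpha|\le N}c_\alpha\partial^\alpha\delta_0$ equals some $h\in L_{1,loc}$, then testing against $\varphi\in\mathcal D(\euclid)$ supported in a small ball $B(0,\varepsilon)$ gives $\langle h,\varphi\rangle=\sum c_\alpha(-1)^{|\alpha|}\partial^\alpha\varphi(0)$, while $|\langle h,\varphi\rangle|\le\|h\|_{L_1(B(0,\varepsilon))}\|\varphi\|_\infty\to0$ as $\varepsilon\to0$ for a fixed bump rescaled appropriately — more cleanly, $h\mathbf 1_{B(0,\varepsilon)}\to0$ in $L_1$, so $T$ vanishes on $\mathcal D(B(0,\varepsilon))$ for every $\varepsilon$, forcing all $c_\alpha=0$. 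Everything else is just unwinding Definition \ref{reali} and using the hypothesis \eqref{inl1loc} twice, so there is no real obstacle here; the statement is a soft structural fact about realizations whose Fourier transforms land in $L_{1,loc}$.
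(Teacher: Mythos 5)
Your proof is correct and takes essentially the same route as the paper: the inclusion $\sigma(E)\subset V$ is immediate from (\ref{inl1loc}), and for the reverse inclusion you note that $f-\sigma([f]_\infty)$ is a polynomial whose Fourier transform is both supported in $\{0\}$ and locally integrable, hence vanishes. The only difference is that you spell out explicitly the standard fact that a nonzero distribution supported at the origin cannot be an $L_{1,loc}$ function, which the paper leaves implicit; your ``more cleanly'' remark is phrased a bit loosely (the vanishing of $\|h\|_{L_1(B(0,\varepsilon))}$ does not literally make $T$ vanish on $\mathcal D(B(0,\varepsilon))$), but the rescaled-bump argument you give first is sound, and one could alternatively just say that an $L_{1,loc}$ function vanishing a.e.\ on $\euclid\setminus\{0\}$ vanishes a.e.
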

 \begin{proof} Let $V$ denote the r.h.s of (\ref{sigma}). Then $\sigma(E)\subset V$ follows immediately by (\ref{inl1loc}). In the opposite sense, if we take
$f\in V$, then $f- \sigma( [f]_\infty)\in {\mathcal P}_\infty$, thus $\mathcal{F}\left(f- \sigma( [f]_\infty)\right)$ is an integrable function near $0$,  supported by $\{0\}$. One concludes that $ f=\sigma( [f]_\infty) $, i.e. $f\in \sigma(E)$.
\end{proof}
\begin{cor} \label{uniquel1}Let $E$ be a quasi-Banach space continuously embedded into $\mathcal{S}'_\infty$.
Then $E$ admits {\em at most one} realization $\sigma$ s.t. {\em (\ref{inl1loc})}.
\end{cor}
\begin{prop}\label{inv} Let $E$ be a quasi-Banach space continuously embedded into $\mathcal{S}'_\infty$.
Assume that  $E$ is invariant by translations (resp. dilations).  If $E$ admits a realization $\sigma$ s.t. {\em (\ref{inl1loc})}, then
$\sigma$ commutes with translations (resp. dilations).
\end {prop}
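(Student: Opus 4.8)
The statement to prove is Proposition~\ref{inv}: if $E\hookrightarrow\mathcal{S}'_\infty$ is invariant under translations (resp. dilations) and admits a realization $\sigma$ satisfying \eqref{inl1loc}, then $\sigma$ commutes with translations (resp. dilations). The plan is to argue by uniqueness: we know from Corollary~\ref{uniquel1} that under \eqref{inl1loc} the realization is unique. So it suffices to exhibit, for each $a\in\R$ (resp. each $\lambda>0$), \emph{some} realization of $E$ that visibly satisfies \eqref{inl1loc} and that we can compare with $\sigma$.

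\textbf{Key steps.} First I would fix $a\in\R$ and define $\sigma_a:=\tau_{-a}\circ\sigma\circ\tau_a$. Since $E$ is translation-invariant, $\tau_a$ maps $E$ to $E$, and $\tau_{-a}$ is continuous on $\mathcal{S}'$, so $\sigma_a$ is a continuous linear map $E\to\mathcal{S}'$. Next I would check it is a realization, i.e. $[\sigma_a(u)]_\infty=u$ for all $u\in E$: this follows because $[\tau_a f]_\infty=\tau_a[f]_\infty$ on $\mathcal{S}'_\infty$ (translation descends to the quotient by polynomials, since $\tau_a$ maps polynomials to polynomials), hence $[\sigma_a(u)]_\infty=\tau_{-a}[\sigma(\tau_a u)]_\infty=\tau_{-a}\tau_a u=u$. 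Then I would verify that $\sigma_a$ satisfies \eqref{inl1loc}: for $u\in E$, $\mathcal{F}(\sigma_a(u))(\xi)=\mathcal{F}(\tau_{-a}\sigma(\tau_a u))(\xi)=e^{i a\cdot\xi}\,\mathcal{F}(\sigma(\tau_a u))(\xi)$, and since $\mathcal{F}(\sigma(\tau_a u))\in L_{1,loc}$ by \eqref{inl1loc} applied to $\sigma$, multiplication by the bounded function $e^{ia\cdot\xi}$ keeps it in $L_{1,loc}$. Now Corollary~\ref{uniquel1} forces $\sigma_a=\sigma$, that is $\tau_{-a}\circ\sigma\circ\tau_a=\sigma$, i.e. $\sigma\circ\tau_a=\tau_a\circ\sigma$. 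This holding for every $a\in\R$ is exactly the assertion that $\sigma$ commutes with translations. The dilation case is entirely parallel: set $\sigma_\lambda:=h_{\lambda^{-1}}\circ\sigma\circ h_\lambda$, note $h_\lambda$ preserves polynomials so $\sigma_\lambda$ is a realization, and under Fourier transform a dilation $h_\lambda$ turns into a dilation together with a factor $\lambda^n$, which again preserves $L_{1,loc}$; uniqueness then gives $\sigma_\lambda=\sigma$.

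\textbf{Main obstacle.} There is no serious obstacle here; the one point requiring a little care is the compatibility of $\tau_a$ (and $h_\lambda$) with passing to the quotient $\mathcal{S}'/\mathcal{P}_\infty\cong\mathcal{S}'_\infty$ — one must observe that these operators map $\mathcal{P}_\infty$ into itself, so they are well defined on $\mathcal{S}'_\infty$ and the identities $[\tau_a f]_\infty=\tau_a[f]_\infty$, $[h_\lambda f]_\infty=h_\lambda[f]_\infty$ make sense and hold. The only other thing to keep in mind is that \eqref{inl1loc} is a statement about local integrability on all of $\euclid$ (including near $0$), but multiplication by $e^{ia\cdot\xi}$ or the dilation change of variables is harmless everywhere, so this causes no trouble. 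Everything else is a direct invocation of Corollary~\ref{uniquel1}.
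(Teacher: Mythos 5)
Your proof is correct, and it takes a genuinely different (though closely related) route from the paper's. The paper's proof is a one-liner: by the explicit set characterization
\[\sigma(E)=\{f\in\mathcal{S}' : [f]_\infty\in E \text{ and } \widehat{f}\in L_{1,loc}(\euclid)\}\]
from Proposition~\ref{transl}, the set $\sigma(E)$ is manifestly translation-invariant (resp.\ dilation-invariant) whenever $E$ is, and the paper then invokes the equivalence stated just before Proposition~\ref{transl} (``$\sigma$ commutes with translations iff $\sigma(E)$ is translation-invariant'') to conclude. You instead construct the conjugated map $\sigma_a=\tau_{-a}\circ\sigma\circ\tau_a$ (resp.\ $h_{\lambda^{-1}}\circ\sigma\circ h_\lambda$), verify it is again a realization satisfying (\ref{inl1loc}), and conclude $\sigma_a=\sigma$ by the uniqueness statement of Corollary~\ref{uniquel1}. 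Both arguments ultimately rest on the same characterization (\ref{sigma}); yours re-derives, on the fly, the prose equivalence the paper takes for granted, and so is more self-contained but longer. One small point you glide over: to claim $\sigma_a$ is a realization (hence apply Corollary~\ref{uniquel1} as stated), you need $\tau_a\colon E\to E$ to be \emph{continuous}, not merely well-defined. Translation-invariance of $E$ as a set does not a priori make the $E$-quasi-norm translation-invariant; one should appeal to the closed graph theorem for complete metrizable TVS (valid here since $E\hookrightarrow\mathcal{S}'_\infty$ continuously and $\tau_a$ is continuous on $\mathcal{S}'_\infty$). Alternatively one can observe that the uniqueness argument underlying Corollary~\ref{uniquel1} never actually uses continuity of the second realization, so the conclusion $\sigma_a=\sigma$ holds regardless. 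Either fix closes the gap; the paper's route avoids it altogether.
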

\begin{proof} By (\ref{sigma}), $\sigma(E)$ is invariant by translations (resp. dilations).
\end{proof}
\begin{prop}\label{FL1loc}
 Let $E$ be a quasi-Banach space, continuously embedded into $\mathcal{S}'_\infty$, s.t. 
  \[\dot{{\mathcal F}} (E) \subset L_{1,loc}(\R\setminus \{0\})\,.\]
  Then the following properties are equivalent :
 \begin{itemize}
\item[(i)] for every $R>0$, there exists $c_R>0$ s.t., for all $u\in E$,
\begin{equation}\label{L1loc} \int_{ 0<|\xi|\leq R} \left| \dot{{\mathcal F}}(u) (\xi)\right|  \mathrm{d}\xi \leq c_R \|u\|_E\,,\end{equation}
\item[(ii)] 
there exists a realization $\sigma$ of $E$ s.t.
 $\mathcal{F}(\sigma (E)) \subset L_{1,loc}(\euclid)$.
 \end{itemize}
\end{prop}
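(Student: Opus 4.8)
The plan is to show (ii)$\Rightarrow$(i) directly and (i)$\Rightarrow$(ii) by an explicit construction. For (ii)$\Rightarrow$(i): if such a $\sigma$ exists, then for $u\in E$ the function $\widehat{\sigma(u)}\in L_{1,loc}(\euclid)$ restricts on $\R\setminus\{0\}$ to $\dot{\mathcal F}(u)$, so $\int_{0<|\xi|\le R}|\dot{\mathcal F}(u)(\xi)|\,\mathrm{d}\xi = \int_{0<|\xi|\le R}|\widehat{\sigma(u)}(\xi)|\,\mathrm{d}\xi \le \int_{|\xi|\le R}|\widehat{\sigma(u)}(\xi)|\,\mathrm{d}\xi$. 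It remains to see that the last quantity is bounded by $c_R\|u\|_E$. This is where I would invoke the closed graph theorem: the map $u\mapsto \widehat{\sigma(u)}\cdot \mathbf 1_{\{|\xi|\le R\}}$ is a linear map from the quasi-Banach space $E$ into $L_1(\{|\xi|\le R\})$; since $\sigma$ is continuous $E\to\cs'$, the Fourier transform is continuous $\cs'\to\cs'$, and $L_1$-convergence implies $\cs'$-convergence, one checks the graph is closed, hence the map is bounded. This gives (\ref{L1loc}).

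For (i)$\Rightarrow$(ii): the idea is to define $\sigma$ by gluing the ``high part'' of $u$ (which is unambiguously a tempered distribution) to a Fourier-side definition of the ``low part'' made possible by (\ref{L1loc}). Concretely, fix the Littlewood--Paley resolution and write, formally, $\sigma(u) = \sum_{j\ge 0} Q_j u + g_u$, where $g_u$ is the distribution whose Fourier transform is the $L_1$-function $\sum_{j<0}\gamma(2^{-j}\cdot)\,\dot{\mathcal F}(u)$ on a neighbourhood of $0$. More carefully: by (\ref{L1loc}) the function $v_u := \dot{\mathcal F}(u)\cdot\sum_{j<0}\gamma(2^{-j}\cdot)$ — which is supported in $\{0<|\xi|\le 3/2\}$ and equals $\dot{\mathcal F}(u)$ near $0$ — lies in $L_1(\euclid)$ with $\|v_u\|_1\le c\|u\|_E$, so it defines by inverse Fourier transform a (bounded continuous) tempered distribution $g_u$, and $u\mapsto g_u$ is continuous $E\to\cs'$. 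The high part $\sum_{j\ge 0}Q_j u$ converges in $\cs'$ (its spectrum lies in $|\xi|\ge 1/2$, and on $E$ the relevant norms are controlled, e.g. via Proposition \ref{lizoryam} or directly since only finitely many scales interact with any Schwartz test function up to rapidly decaying tails), and is continuous in $u$. Set $\sigma(u) := g_u + \sum_{j\ge 0}Q_j u$. One then verifies $[\sigma(u)]_\infty = u$: on $\sch{\infty}$, pairing against $\varphi$ picks out $\sum_j \langle Q_j u,\varphi\rangle$, and the pieces $g_u$ versus $\sum_{j<0}Q_j u$ differ by something whose Fourier transform is supported at $\{0\}$, hence a polynomial, hence killed in $\temp{\infty}$. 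Finally $\widehat{\sigma(u)} = v_u + \sum_{j\ge0}\widehat{Q_j u}$; the first term is in $L_1$ and the second is locally a finite sum of smooth compactly supported functions away from $0$ and vanishes near $0$, so $\widehat{\sigma(u)}\in L_{1,loc}(\euclid)$, giving (ii).

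The main obstacle is the verification in (i)$\Rightarrow$(ii) that $\sigma$ is genuinely a realization, i.e. that the ad hoc surgery between the Fourier-defined low part $g_u$ and the distributionally-defined high part $\sum_{j\ge0}Q_j u$ produces a well-defined tempered distribution whose class mod polynomials is exactly $u$; this requires care about convergence of $\sum_{j\ge 0}Q_j u$ in $\cs'$ and about the fact that the ambiguity in the low part is precisely a polynomial. A secondary technical point is the closed-graph argument for the quasi-Banach $E$ in the (ii)$\Rightarrow$(i) direction, but that is routine once one notes both target topologies are metrizable and comparable through $\cs'$. Everything else — Hausdorff--Young-free estimates, the support bookkeeping — is bookkeeping of the kind already used in the proof of Theorem \ref{thszasz}.
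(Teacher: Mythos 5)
Your (ii)$\Rightarrow$(i) direction is the same closed-graph argument the paper uses, only cast as a map $E\to L_1(\{|\xi|\le R\})$ rather than $\sigma(E)\to L_{1,loc}(\euclid)$; this is an inessential repackaging. For (i)$\Rightarrow$(ii) you take a genuinely different and heavier route. The paper simply extends the function $\dot{\mathcal F}(u)$ across the null set $\{0\}$ to a $T(u)\in L_{1,loc}(\euclid)$, notes that $T(u)\in\mathcal{S}'$ because $T(u)-\widehat{f}$ (for any representative $f$ of $u$) is a distribution supported at $\{0\}$, hence a finite combination of derivatives of $\delta_0$, and sets $\sigma:=\mathcal{F}^{-1}\circ T$; no Littlewood--Paley decomposition is required, and continuity follows from (\ref{L1loc}) near $0$ together with $E\hookrightarrow\temp{\infty}$ away from $0$. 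Your construction $\sigma(u)=g_u+\sum_{j\ge 0}Q_ju$ does work, and by Corollary \ref{uniquel1} it necessarily yields the \emph{same} $\sigma$, but it introduces two technicalities the paper avoids. First, appealing to Proposition \ref{lizoryam} to justify convergence of $\sum_{j\ge 0}Q_ju$ is off the mark: that proposition concerns $\dot A^s_{r,q}$ quasi-norms, not an arbitrary $E$. In fact the convergence is automatic for every $u\in\temp{\infty}$, since for $\varphi\in\sch{}$ the function $\sum_{j\ge0}\gamma(2^{-j}\cdot)\widehat{\varphi}$ is Schwartz, vanishes identically near the origin, and so corresponds to an element of $\sch{\infty}$. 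Second and more seriously, your verification that $[\sigma(u)]_\infty=u$ is circular as phrased: the sentence about ``$g_u$ versus $\sum_{j<0}Q_ju$ differing by something whose Fourier transform is supported at $\{0\}$'' presupposes that $\sum_{j<0}Q_ju$ converges in $\mathcal{S}'$, which is precisely the failure mode that makes realizations a nontrivial notion. The clean repair is to check directly that $\widehat{\sigma(u)}$ restricted to $\R\setminus\{0\}$ equals $\dot{\mathcal F}(u)\cdot\sum_{j\in\z}\gamma(2^{-j}\cdot)=\dot{\mathcal F}(u)$, and then use the injectivity of $\dot{\mathcal F}$ from Proposition \ref{fourier+}. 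Net assessment: your argument is correct in substance and reaches the same realization, but the paper's direct extension of $\dot{\mathcal F}(u)$ is shorter and sidesteps both of these issues.
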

\begin{proof} 
{\em Step 1 : (i) $\Rightarrow$ (ii).} Let $u\in E$ : by extending arbitrarily $\dot{\mathcal F}(u)$ at $0$, we obtain a function, say $T(u)$, 
in $L_{1,loc}(\R)\cap \mathcal{S}'$. By property (\ref{L1loc}), the mapping $T$, defined in such a way, is linear and continuous from $E$ to $\mathcal{S}'$. By setting $\sigma:= \mathcal{F}^{-1} \circ T$ we obtain a realization of $E$ s.t. $\mathcal{F}(\sigma ( E))\subset L_{1,loc}(\euclid)$.

{\em Step 2 : (ii) $\Rightarrow$ (i).} Let us endow $\sigma (E)$ with the norm $\| -\|_E$. By the Closed Graph theorem, the mapping $\mathcal{F}: \sigma (E) \rightarrow  L_{1,loc}(\euclid)$ is continuous. The estimate (\ref{L1loc}) follows.
\end{proof}

\subsection{The strong Szasz property}

By Proposition \ref{inv}, the strong Szasz property is related to the existence of translation commuting realizations. Let us recall the following statement :

\begin{prop}\label{realtrans} The space   $\dot{B}^s_{r,q}(\R)$ admits a translation commuting realization iff
\begin{equation}\label{condrB} s<n/r \quad \mathrm{or} \quad (\,s=n/r \quad \mathrm{and} \quad q\leq 1\,)\,.\end{equation}
The space   $\dot{F}^s_{r,q}(\R)$ admits a translation commuting realization iff
\begin{equation}\label{condrL} s<n/r \quad \mathrm{or} \quad (\,s=n/r \quad \mathrm{and} \quad r\leq 1\,)\,.\end{equation}
\end{prop}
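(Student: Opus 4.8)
The plan is to reduce the question to the behaviour of the partial sums $S_N f := \sum_{j\le N} Q_j f$ of the Littlewood--Paley series, since these are natural candidates for a translation commuting realization. For the ``if'' direction I would argue as follows. Suppose (\ref{condrB}) holds (the $F$-case with (\ref{condrL}) is entirely parallel, using the $F$-space norm). Given $f\in\dot B^s_{r,q}$, I claim that $\sigma(f):=\lim_{N\to\infty} S_N f$ exists in $\cs'$. The high-frequency tail $\sum_{j\ge 1}Q_jf$ always converges in $\cs'$ (indeed to a function in some $L_r$-type space) by Proposition \ref{lizoryam}, so the issue is only the low-frequency part $\sum_{j\le 0}Q_jf$. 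Here one estimates $\|Q_jf\|_\infty$ (or a suitable Sobolev norm) by $c\,2^{j(n/r)}\|Q_jf\|_r \le c\,2^{j(n/r-s)}(2^{js}\|Q_jf\|_r)$, using Nikol'skij's inequality on the annulus $|\xi|\sim 2^j$. When $s<n/r$ the factor $2^{j(n/r-s)}$ is summable over $j\le 0$ against an $\ell^q$ (even $\ell^\infty$) sequence, so the series converges absolutely in $\cs'$; when $s=n/r$ and $q\le 1$ one uses instead that the sequence $(2^{js}\|Q_jf\|_r)_{j\le 0}$ lies in $\ell^q\subset\ell^1$ to get absolute convergence. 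In either case $\sigma$ is linear, continuous $\dot B^s_{r,q}\to\cs'$, satisfies $[\sigma(f)]_\infty=f$ by Proposition \ref{LPdecompbis} (the polynomial corrections $w_j$ vanish because the partial sums already converge in $\cs'$), and manifestly commutes with translations since each $Q_j$ does.

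For the ``only if'' direction the strategy is to exhibit, when (\ref{condrB}) fails (i.e. $s>n/r$, or $s=n/r$ with $q>1$), an explicit element $f\in\dot B^s_{r,q}$ and a sequence of translates $a_k\to\infty$ such that no single tempered distribution can represent $f$ in a translation-invariant way. Concretely, if $\sigma$ commuted with translations then $\sigma(E)$ would be a translation-invariant subspace of $\cs'$ on which the quasi-norm $\|\cdot\|_E$ is finite and the graph is closed; one derives a uniform bound of the form $\|\varphi * g\|_{\text{something}} \le c\|g\|_{\dot B^s_{r,q}}$ for test functions $\varphi$, and then violates it by taking $g=h_{2^{-k}}\psi$ as in Substep 2.3 (a single bump rescaled to frequency $2^k$, with $\|g\|_{\dot B^s_{r,q}}\sim 2^{k(s-n/r)}$) — when $s>n/r$ this norm tends to $0$ while the low-frequency ``DC component'' that a realization would have to pin down does not, forcing a contradiction. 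The borderline case $s=n/r$, $q>1$ is handled by summing such bumps with $\ell^q$-but-not-$\ell^1$ coefficients, exactly mirroring the sharpness examples of Section~3.

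The main obstacle is the ``only if'' direction, and more precisely pinning down the right invariant quantity that a translation commuting realization is forced to control: one must show that the obstruction is genuinely the failure of summability of the low-frequency blocks and not merely an artefact of one particular choice of representative. I expect this to require the characterization already recorded in Proposition \ref{FL1loc} together with a Baire-category / closed-graph argument to produce the uniform estimate that the scaling examples then destroy. The ``if'' direction, by contrast, is essentially a bookkeeping exercise with Nikol'skij's inequality (Proposition \ref{lizoryam}) and the embedding $\dot A^s_{r,q}\hookrightarrow\dot B^{s-n/p}_{\infty,\infty}$ used in the proof of Proposition \ref{LPdecompbis}; the only delicate point there is the endpoint $s=n/r$, where the distinction between $q\le 1$ and $q>1$ (resp. $r\le 1$ and $r>1$ in the $F$-case) enters through the failure of $\ell^q\subset\ell^1$.
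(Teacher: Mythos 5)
Your sketch of the \emph{sufficiency} direction is essentially the argument the paper has in mind, and it is correct in spirit: the candidate realization is $\sigma_0(f)=\lim_{N\to\infty}\sum_{j\le N}Q_jf$, the high-frequency tail always converges in $\cs'$, and the low-frequency tail is handled by Nikol'skij's inequality $\|Q_jf\|_\infty\le c\,2^{jn/r}\|Q_jf\|_r$ together with summability of $2^{j(n/r-s)}$ over $j\le 0$ (or $\ell^q\subset\ell^1$ at the endpoint). The paper does not carry this out in detail but cites \cite[prop.~4.6]{Bou_13} and \cite[thm.~4.1]{Mou_15}, which prove precisely that the Littlewood--Paley series converges in $\cs'$ under (\ref{condrB}), resp. (\ref{condrL}); so here your approach and the paper's agree, you merely make explicit what the references contain.

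The \emph{necessity} direction is where the proposal has a genuine gap. First, a concrete inconsistency: you write that $g=h_{2^{-k}}\psi$ is ``rescaled to frequency $2^k$'' and has $\|g\|_{\dot B^s_{r,q}}\sim 2^{k(s-n/r)}$; this quantity tends to infinity, not to $0$, when $s>n/r$ and $k\to\infty$. For the norm to degenerate while the function concentrates at low frequency you need $k\to-\infty$, i.e. the bumps $h_{2^k}\psi$ with $k\to\infty$. More seriously, the contradiction you gesture at does not yet close: a realization is only required to satisfy $\sigma([g]_\infty)-g\in\poly{\infty}$, so the ``DC component'' can be absorbed into the polynomial correction, and continuity alone does not rule that out. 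What actually forces the contradiction in the literature is that translation-commutativity (combined with the identity $\sigma\circ\tau_a=\tau_a\circ\sigma$ applied to suitable families of translates) rigidly determines the polynomial corrections, leaving no freedom to absorb the divergent low-frequency sums; the proof in \cite[thm.~4.2]{Bou_13} is built on exactly this rigidity, not on a closed-graph/Baire scheme. Indeed the paper itself does not reprove necessity but cites \cite[thm.~4.2 (2)]{Bou_13} for the case $\min(r,q)\ge 1$ and observes that the same proof extends to general $r,q>0$. If you want a self-contained argument you would need to reproduce that rigidity lemma; the outline you give, even after fixing the sign, would not by itself exclude a translation-commuting realization.
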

\begin{proof} {\em Step 1.} Assume that  the conditions (\ref{condrB}) and (\ref{condrL}) hold, respectively. For any $u\in  \dot{A}^s_{r,q}(\R)$, the series
$ \sum_{j\in \z} Q_ju$ converges as well in $\temp{}$, see  \cite[prop.~4.6]{Bou_13} in case $\min (r,q)\geq 1$ and \cite[thm.~4.1]{Mou_15} in the general case ;
denoting by $\sigma_0 (u)$ its sum in $\temp{}$, we obtain a realization $\sigma_0 : \dot{A}^s_{r,q}(\R) \rightarrow \temp{}$, which clearly commutes with translations (Indeed $\sigma_0$ commutes as well with dilations, see  \cite[prop.~4.6 (2)]{Bou_13} and \cite[thm.~1.2]{Mou_15}). 

{\em Step 2.} Assume that the conditions (\ref{condrB}) and (\ref{condrL}) does not hold, respectively.
In case $\min (r,q)\geq 1$, we proved
 that, for some integer $\nu\geq 1$,
 $ \dot{A}^s_{r,q}(\R)$ admits no translation commuting realization in $\temp{\nu -1}$, the space of tempered distributions modulo polynomials of degree less than $\nu -1$, see  \cite[thm.~4.2 (2)]{Bou_13}. It is not difficult to see that this proof works as well for any $r,q>0$.
 {\it A fortiori} $ \dot{A}^s_{r,q}(\R)$ admits no translation commuting realization in $\temp{}$.
\end{proof}
\begin{thm}\label{strongSz}
 Let $s,p,q,r$ s.t. {\em (\ref{param})}. Then the system $(s,p,q,r,n)$ satisfies the property (s-SB) (resp. (s-SF)) iff both  conditions {\em (\ref{condB})} and  {\em (\ref{condrB})} 
(resp.  conditions {\em (\ref{condL})} and  {\em(\ref{condrL})}\,) hold.
\end{thm}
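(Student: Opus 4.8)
The plan is to combine the weak Szasz theorem (Theorem~\ref{thszasz}), the characterization of translation-commuting realizations (Proposition~\ref{realtrans}), and the abstract machinery of Section~\ref{real} (Remark~\ref{wtos}, Propositions~\ref{transl}, \ref{inv}, \ref{FL1loc} and Corollary~\ref{uniquel1}). I will treat the two directions of the equivalence separately, and focus on the $B$-case; the $F$-case is identical with (\ref{condB}), (\ref{condrB}) replaced by (\ref{condL}), (\ref{condrL}).

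\emph{Sufficiency.} Assume (\ref{condB}) and (\ref{condrB}) hold. By Theorem~\ref{thszasz}, the system satisfies (w-SB); in particular $\dot{\mathcal F}(u)$ is locally integrable on $\R\setminus\{0\}$ for every $u\in\dot B^s_{r,q}$, and the estimate (\ref{szaszbis}) holds. I want to upgrade this to (s-SB), i.e. produce a realization $\sigma$ with $\mathcal F(\sigma(\dot B^s_{r,q}))\subset L_{1,loc}(\euclid)$. By Remark~\ref{wtos} it suffices to verify condition (\ref{realinl1}) via the existence of such a realization, and by Proposition~\ref{FL1loc} this amounts to the local estimate (\ref{L1loc}): for each $R>0$, $\int_{0<|\xi|\le R}|\dot{\mathcal F}(u)(\xi)|\,\mathrm d\xi\le c_R\|u\|_{\dot B^s_{r,q}}$. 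When $p\ge 1$ this follows at once from (\ref{szaszbis}) by H\"older's inequality on the ball $|\xi|\le R$ (the weight $|\xi|^{-\theta}$ being integrable there, since the annular contributions can be summed — one checks the exponent using $\theta=s+n-n/p-n/r$). When $p<1$ a direct argument on the Littlewood–Paley pieces is needed: split $\int_{0<|\xi|\le R}|\dot{\mathcal F}(u)|\,\mathrm d\xi = \sum_{2^{j}\lesssim R}\int_{U_j}|\sum_k \widehat{Q_ku}|\,\mathrm d\xi$, estimate each annular term by (\ref{HY}) together with H\"older (using $p\le r'$, which is part of (\ref{condB})), and sum the resulting geometric-type series in $j$; here condition (\ref{condrB}), namely $s<n/r$ or ($s=n/r$ and $q\le 1$), is exactly what makes the low-frequency sum $\sum_{j\le J}2^{j(n-\theta)}\cdot(\dots)$ converge after applying H\"older in $j$ against the $\ell^q$ norm $\|u\|_{\dot B^s_{r,q}}$. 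Once (\ref{L1loc}) is established, Proposition~\ref{FL1loc} gives the realization $\sigma$, and (\ref{szasz}) for $f\in\sigma(\dot B^s_{r,q})$ is just (\ref{szaszbis}) rewritten, since $\widehat f$ restricted to $\R\setminus\{0\}$ equals $\dot{\mathcal F}([f]_\infty)$ and $\widehat f$ is locally integrable across $0$ as well.

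\emph{Necessity.} Conversely, suppose $(s,p,q,r,n)$ satisfies (s-SB). Then it satisfies (w-SB) (Remark~\ref{wtos}), so (\ref{condB}) holds by Theorem~\ref{thszasz}. It remains to derive (\ref{condrB}). The realization $\sigma$ furnished by (s-SB) satisfies $\mathcal F(\sigma(\dot B^s_{r,q}))\subset L_{1,loc}(\euclid)$, i.e. condition (\ref{inl1loc}); since $\dot B^s_{r,q}$ is translation invariant, Proposition~\ref{inv} shows that $\sigma$ commutes with translations. Hence $\dot B^s_{r,q}$ admits a translation-commuting realization, and Proposition~\ref{realtrans} forces (\ref{condrB}). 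This closes the equivalence.

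\emph{Main obstacle.} The routine part is the abstract bookkeeping; the one genuinely delicate point is the sufficiency argument when $p<1$, where the naive passage from (\ref{szaszbis}) to (\ref{L1loc}) via H\"older fails and one must instead return to the frequency decomposition and see precisely how (\ref{condrB}) controls the low-frequency summation — this is the place where the extra hypothesis beyond (w-SB) is actually consumed, and it is worth isolating as a lemma (essentially: under (\ref{condB}) and (\ref{condrB}), $\sum_{j\le 0}2^{j(n-\theta)}\|Q_ju\|_r\le c\|u\|_{\dot B^s_{r,q}}$, with the analogous $F$-space estimate using the embedding $\dot F^s_{r,q}\hookrightarrow\dot B^s_{r,\max(r,q)}$ or a direct maximal-function argument). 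Everything else — the $F$-case, Theorem~\ref{thszaszinhom}'s analogue if needed — follows by the same template with the cited embeddings and Proposition~\ref{realtrans}'s $F$-statement.
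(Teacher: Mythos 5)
Your necessity argument is exactly the paper's: (s-SB)\,$\Rightarrow$\,(w-SB)\,$\Rightarrow$\,(\ref{condB}) by Theorem~\ref{thszasz}, and (\ref{inl1loc})\,$\Rightarrow$ translation-commuting realization (Proposition~\ref{inv})\,$\Rightarrow$\,(\ref{condrB}) by Proposition~\ref{realtrans}. That part is fine. There is, however, a genuine gap in the sufficiency direction, and it sits precisely in the case you treat as ``routine''.

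You claim that for $p\ge 1$ the estimate (\ref{L1loc}) ``follows at once from (\ref{szaszbis}) by H\"older's inequality on the ball $|\xi|\le R$'', justified by ``the weight $|\xi|^{-\theta}$ being integrable there''. That is the wrong exponent: H\"older against the $L_p$ bound (\ref{szaszbis}) requires $|\xi|^{-\theta p'}$ to be integrable near the origin, i.e. $\theta p'<n$, which is equivalent to $s<n/r$. At the endpoint allowed by (\ref{condrB}), namely $s=n/r$ (with $q\le 1$), one computes $\theta=n-n/p=n/p'$, so $\theta p'=n$ and $\int_{|\xi|\le R}|\xi|^{-\theta p'}\,\mathrm d\xi$ diverges; your H\"older step simply fails for any $p>1$ there. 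Thus the dichotomy ``$p\ge 1$: H\"older; $p<1$: direct Littlewood--Paley argument'' does not cover the theorem: the borderline $s=n/r$, $p>1$ case falls through.

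What actually closes the gap is the observation the paper exploits and that your write-up never makes: the realization is independent of $p$. The paper therefore does \emph{not} feed the original $p$ into H\"older. In Step~1 ($s<n/r$) it invokes Theorem~\ref{thszasz} for the auxiliary system $(s,r',q,r,n)$ --- legitimate because $q\le p\le r'$ gives $q\le r'$ --- and then the H\"older exponent is the fixed $r$, with $\theta r<n$ reducing exactly to $s<n/r$. In Step~2 ($s=n/r$, $q\le 1$) it uses the auxiliary system $(s,1,q,r,n)$, whose Szasz exponent is $0$, so (\ref{szaszbis}) is already the $L_1$ bound with no H\"older at all. Once (\ref{L1loc}) is secured by either of these, Proposition~\ref{FL1loc} and Remark~\ref{wtos} give (s-SB) for the original $(s,p,q,r,n)$. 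Incidentally, the ``direct Littlewood--Paley argument'' you sketch for $p<1$ (sum $2^{jn/r}\|Q_ju\|_r$ over $j\le J$, controlled by $\|u\|_{\dot B^s_{r,q}}$ via (\ref{condrB})) is in fact valid for all $p$, since $p$ only enters through $q\le p\le r'$; had you applied it uniformly instead of reserving it for $p<1$, the gap would disappear --- but the paper's route via the auxiliary exponents $r'$ and $1$ is shorter and avoids re-doing the annular analysis.
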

\begin{proof} The necessity of the various conditions follows by Propositions \ref{inv} and \ref{realtrans}, and Theorem \ref{thszasz}.
We turn now to the proof of {\em sufficiency}. According to Remark \ref{wtos},  it will suffice to find a realization $\sigma$ of  $\dot{A}^s_{r,q}(\R)$ s.t. 
(\ref{realinl1}). We limit ourselves to Besov spaces. Similar arguments work in case of  Lizorkin-Triebel spaces.

 {\em Step 1.}  Assume $0<r\leq 2$, $s<n/r$ and $q\leq r'$. According to Theorem \ref{thszasz}, the system $(s,r',q,r,n)$ has the property
 (w-SB) with Szasz exponent 
 \[ \theta = s+n-\frac{n}{r'}-\frac{n}{r} < n\left(1-\frac{1}{r'}\right)\,.\]
By H\"older's inequality, it holds
\[ \int_{ 0<|\xi|\leq R} \left| f(\xi)\right|  \mathrm{d}\xi \leq c\, R^{ n\left(1-\frac{1}{r'}\right)-\theta}\,
 \left(       \int_{\R\setminus \{0\}}     |\xi|^{\theta r'} |f(\xi)|^{r'}\,\mathrm{d}\xi                            \right)^{1/r'} \]
 for every $R>0$ and every $f\in L_{1,loc}(    \R\setminus \{0\})$. Then we conclude with the aid of 
 Proposition \ref{FL1loc}.
   
  {\em Step 2.}  Assume $0<r\leq 2$, $s=n/r$, $q\leq 1$ . The system $(s,1,q,r,n)$ has the property
 (w-SB) with Szasz exponent equal to $ 0$. That means :
  \[       \int_{\R \setminus \{0\}}     |\dot{\mathcal F}(u)(\xi)|\,\mathrm{d}\xi                                
   \leq c\, \|u\|_{\dot B^{s}_{r,q}}\, .          \]
We can apply again Proposition \ref{FL1loc}. \end{proof}

  \begin{rem} By known results on unicity of translation or dilation commuting realizations, see \cite[thms.~4.1, 4.2]{Bou_13}, the realization obtained in Theorem \ref{strongSz} coincide with the ``standard'' realization $\sigma_0$ introduced in the proof of Proposition \ref{realtrans}.
  \end{rem}
 
 \subsection*{Acknowledgment} I thank Madani Moussai, Herv\'e Qu\'effelec and Winfried Sickel for useful discussions in the preparation of the paper.

Universit\'e de Paris, I.M.J. - P.R.G (UMR 7586)\\
B\^atiment Sophie Germain\\
       Case 7012\\  
   F 75205 Paris Cedex 13   
                        
                        bourdaud@math.univ-paris-diderot.fr


\begin{thebibliography}{99}		
		\bibitem{Bou_13} G.~Bourdaud. {\em Realizations of homogeneous Besov and Lizorkin-Triebel spaces}. Math. Nachr. {\bf 286} (2013), 476--491.
		\bibitem{BMS_10} G.~Bourdaud, M.~Moussai, W.~Sickel. {\em Composition operators in  Lizorkin-Triebel spaces}.  J. Funct. Anal. {\bf 259} (2010), 1098--1128.
		\bibitem{Ja} B.~Jawerth. {\em Some observations on Besov and Lizorkin-Triebel spaces}.  Math. Scand.  {\bf 40} (1977), 94--104.
		\bibitem{RK} R.~Kaufmann. {\em On the theorem of Jarn\'ik and Besicovitch}. Acta Arithmetica {\bf 39} (1981), 265--267.
		\bibitem{Mou10}
		M.~Moussai. {\em Composition operators on Besov algebras}. Revista Mat. Iberoamer. 
		{\bf 28} (2012), 239--272.
		\bibitem{Mou_15} M.~Moussai. {\em Realizations of homogeneous Besov and Triebel-Lizorkin spaces and an application to pointwise multipliers}.   Anal. Appl. (Singap.)    {\bf 13} (2015), 149--183.
		\bibitem{Pe} J.~Peetre. {\em New Thoughts on Besov Spaces}. Duke Univ.
		Math. Series I, Durham, N.C., 1976.
		\bibitem{RS_96}T.~Runst \& W.~Sickel. {\em Sobolev Spaces of Fractional Order, Nemytskij Operators, and Nonlinear Partial Differential Equations}. de Gruyter, Berlin, 1996.
		\bibitem{ST} H.J.~Schmeisser \& H.~Triebel. {\em Topics in Fourier analysis and function spaces}. Geest\ \&\ Portig, Leipzig 1987, Wiley, Chichester 1987.
		\bibitem{TW}T-H.~Wolff. {\em Lectures on Harmonic Analysis.} University Lecture Series. AMS Press, 2003.
		\bibitem{YA} M.~Yamazaki. {\em A quasi-homogeneous version of paradifferential operators, I: Boundedness on spaces of Besov type.}
		J.~Fac.~Sci.~Univ.~Tokyo, Sect.~IA Math.~{\bf 33} (1986), 131--174.
		{\em II: A Symbolic calculus.} ibidem {\bf 33} (1986), 311--345.
		\end{thebibliography}
\end{document}